\documentclass{amsart}
\usepackage[utf8]{inputenc}
\usepackage[english]{babel}
\usepackage{amssymb,amsmath,amsthm}
\usepackage{verbatim}

\usepackage{color}
\usepackage{amssymb}
\usepackage{amsthm,array,amssymb,amscd,amsfonts,latexsym, url}
\usepackage{amsmath}

\usepackage[all]{xy}



\numberwithin{equation}{section}

\newtheorem{dummy}{dummy}[section]

\newtheorem{question}[dummy]{Question}
\newtheorem{theorem}[dummy]{Theorem}
\newtheorem{corollary}[dummy]{Corollary}
\newtheorem{lemma}[dummy]{Lemma}
\newtheorem{proposition}[dummy]{Proposition}
\newtheorem{remark}[dummy]{Remark}

\newtheorem{theo}{Theorem}
\newtheorem{prop}[theo]{Proposition}

\newtheorem{coro}[theo]{Corollary}
\newtheorem{rema}[theo]{Remark}

\newfont{\gothic}{eufb10}

\def\A{\mathbb A}

\def\L{\mathbb L}
\def\P{\mathbb P}

\def\R{\mathbb R}

\def\Z{\mathbb Z}

\def\OO{\mathcal O}

\def\XX{\mathcal X}

\def\wt{\tilde}

\def\Spec{{\operatorname{Spec}}}

\def\={\;=\;}
\def\bal{\begin{aligned}}
\def\eal{\end{aligned}}
\def\be{\begin{equation}\label}
\def\ee{\end{equation}}
\def\mod#1{\; \left({\rm mod} \; #1\right)}

\title{Variation of Stable Birational Types of Hypersurfaces}
\author{Evgeny Shinder, with an Appendix by Claire Voisin}
\address{E.S.: School of Mathematics and Statistics, University of Sheffield,
Hounsfield Road, S3 7RH, UK, and
National Research University Higher School of Economics, Russian Federation}
\email{e.shinder@sheffield.ac.uk}

\address{C.V.: Coll\`{e}ge de France
3 rue d'Ulm, 75005 Paris, France}
\email{claire.voisin@imj-prg.fr}


\begin{document}

\maketitle

\begin{abstract}
We introduce and study the question
how can stable birational types
vary in a smooth proper family.
Our starting point is the specialization for stable birational types of 
Nicaise and the author and our emphasis is on stable
birational types of hypersurfaces.
Building up on the work of Totaro 
and Schreieder 
on stable irrationality
of hypersurfaces of high degree, we
show that smooth Fano hypersurfaces of large degree 
over a field of characteristic zero are in general not stably birational to each other. 
In the appendix Claire Voisin proves a similar result in a different setting using the Chow decomposition of diagonal and unramified cohomology.    
\end{abstract}

\section{Introduction}

Let $k$ be an uncountable algebraically closed field of characteristic zero. 
Recall that $k$-varieties $X$, $Y$ of the same dimension are 
called \emph{stably birational} if $X \times \P^m$ and $Y \times \P^m$ are birational 
for some $m \ge 0$.
If in the above definition $Y$ is a projective space, then $X$ is called
\emph{stably rational}. 
There has been recently a lot of progress in showing that for large
classes of varieties, including Fano 
hypersurfaces of high degree, 
very general members are stably irrational \cite{Voisin, CTP,
Totaro, Schreieder}.
In this paper we introduce and study the following more general question:

\begin{question}
Given a family of smooth projective varieties,
how can we decide if all members 
are stably birational to each other?
\end{question}

We answer this question for Fano hypersurfaces 
of sufficiently high degree.
Our main result is the following:

\begin{theorem}[See Theorem \ref{thm:hypers}]
If there exists a stably irrational smooth projective hypersurface
of dimension $n$ and degree $d \le n+1$, 
then very general hypersurfaces of dimension $n$ and degree $d$
are not stably birational to each other.
\end{theorem}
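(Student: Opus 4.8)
The plan is to argue by contradiction, using the specialization morphism for stable birational types of Nicaise and the author together with repeated degenerations that split off a hyperplane. Let $U\subseteq\P^N$ be the irreducible parameter space of smooth hypersurfaces of dimension $n$ and degree $d$, with universal family $\mathcal{X}\to U$. Stable birationality of two fibers --- the existence of a birational map between their products with some $\P^m$ --- is a countable union of closed conditions on $U\times U$, so either the two independent generic fibers are stably birational, in which case the very general members are pairwise stably birational, or they are not and we are done. Assume the former, with common class $\tau\in\Z[\mathrm{SB}_k]\cong K_0(\mathrm{Var}_k)/(\L)$ (Larsen--Lunts). The first point is $\tau\neq 1$: the given stably irrational $Z_0$ is a fiber $\mathcal{X}_{p_0}$, and a smooth curve through $p_0$ dominating $U$ realizes $Z_0$ as the smooth special fiber of a smooth proper family whose generic fiber is the very general member; by specialization of stable rationality, $\tau=1$ would force $Z_0$ to be stably rational, a contradiction.

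Next I would produce a degeneration relation. For $2\le e\le d$ fix a general hyperplane $H$ and a general hypersurface $Y$ of degree $e-1$, so that $Y\cup H$ is reduced with smooth components meeting transversally along the smooth degree-$(e-1)$ variety $C=Y\cap H$ of dimension $n-1$. Choosing a general pencil through $Y\cup H$ whose generic member is a very general hypersurface of degree $e$, the specialization result applied to a regular semistable model with special fiber $Y\cup H$ gives, in $\Z[\mathrm{SB}_k]$, the inclusion--exclusion identity
\[
[X_e]=[Y]+[H]-[C]=[Y]+1-[C],
\]
because $H\cong\P^n$ is rational. Here $[X_e]$ is the very general class in degree $e$. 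Since $\Z[\mathrm{SB}_k]$ is free on stable birational classes, rewriting this as $[X_e]+[C]=[Y]+1$ forces the multiset equality $\{[X_e],[C]\}=\{[Y],1\}$, so that either $[X_e]=[Y]$ (and $C$ is stably rational) or $[X_e]=1$.

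I would then run this downward in $e$. By hypothesis $[X_d]=\tau\neq 1$, so the alternative $[X_d]=1$ is excluded and $\tau=[Y]$ for the general degree-$(d-1)$ hypersurface $Y$; thus the very general hypersurface of degree $d-1$ and dimension $n$ again has class $\tau$. Iterating the same step, and discarding the alternative $[X_e]=1$ at each stage using $\tau\neq 1$, I conclude that the very general hypersurface of degree $e$ and dimension $n$ has class $\tau$ for every $2\le e\le d$. Taking $e=2$ gives that the very general $n$-dimensional quadric has class $\tau$; but smooth quadrics are rational, so $\tau=1$, contradicting $\tau\neq 1$. This contradiction shows the very general members cannot be pairwise stably birational.

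The main obstacle is the construction behind the displayed identity. The naive total space $\{sF+tg\ell=0\}\subseteq\P^{n+1}\times\P^1$ of the pencil is singular along $C\cap\{F=0\}\subseteq C$, so one must pass to a regular model --- for instance by blowing up the total space along $C$ --- and verify that its special fiber is semistable and that the resulting exceptional component, a $\P^1$-bundle over $C$, together with the new strata contributes trivially modulo $\L$, so that the clean two-term inclusion--exclusion survives; this is exactly where the model-independence of the motivic volume of Nicaise and the author is used. A secondary point is to check that a pencil through the reducible member $Y\cup H$ can be chosen with very general generic member, i.e. avoiding the countably many proper closed loci in $\P^N$ off which the stable birational type is constant; this is a routine count for pencils through a fixed point.
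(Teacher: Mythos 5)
Your proposal is essentially correct, but it takes a genuinely different route from the paper. The paper degenerates a degree-$d$ hypersurface in a \emph{single} step to a union of $d$ hyperplanes, computes that this simple normal crossing arrangement has class $\equiv 1 \mod \L$ (this is where the Fano condition $d\le n+1$ enters: for $r>n+1$ hyperplanes the alternating sum of binomial coefficients truncates and is no longer $1$), and then gets $1\equiv[X]\mod\L$ directly from the specialization result, contradicting stable irrationality via Larsen--Lunts. You instead peel off one hyperplane at a time, degenerating degree $e$ to $Y\cup H$ with $Y$ of degree $e-1$, and you exploit the \emph{strong} form of Larsen--Lunts --- that $K_0(Var/k)/(\L)$ is the free abelian group $\Z[\mathrm{SB}_k]$ on stable birational classes --- so that the relation $\tau+[C]=[Y]+1$ forces, by matching basis elements and using $\tau\neq 1$, that $[Y]=\tau$ (and $[C]=1$); inducting down to quadrics gives $\tau=1$, a contradiction. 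What your approach buys is notable: nowhere do you use $d\le n+1$, so your argument, if completed, proves variation for \emph{any} degree $d$ admitting a stably irrational smooth member, whereas the paper's hyperplane-arrangement computation (its Lemma \ref{lem:hyperp}) genuinely requires the Fano range. The price is the multiset argument (needing freeness, not just the paper's weaker ``congruent mod $\L$ iff stably birational'') and an induction in which a constancy statement must be propagated through every degree $e<d$.

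Two points need repair. First, your suggested resolution --- blowing up the total space along $C$ --- does not work: in the local model $s x_3=x_1x_2$ (times $\A^{n-2}$), the chart of the blow-up of $(s,x_1,x_2)$ where $x_1$ generates has strict transform $\sigma x_3=x_1w$, i.e.\ the same ordinary-double-point singularity reappears. The correct move, exactly as in the paper's Lemma \ref{lem:desing} with $k=2$, is to blow up one \emph{component} of the special fiber (the Weil divisor $Y$), which resolves everything in one step with no new divisorial component, and leaves the class of the central fiber unchanged mod $\L$; you flagged this step as the main obstacle, but your proposed recipe itself would need to be replaced. Second, the specialization result you invoke (the paper's Proposition \ref{prop:snc}, from \cite{NS}) requires \emph{all} smooth fibers of the pencil to be stably birational to a fixed $X$, and a pencil through the reducible member may still meet the bad locus in countably many points even when it is not contained in it; so ``choose a pencil with very general generic member'' is not quite enough as stated. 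The clean fix is to upgrade case (a) of your dichotomy from ``very general members pairwise stably birational'' to ``\emph{all} smooth members pairwise stably birational'' (as in the paper's Theorem \ref{thm:variation}): since the stably-birational locus in $U\times U$ is a countable union of closed subsets containing the very general points, and $U\times U$ is irreducible over an uncountable field, it must be everything. With that upgrade, at each stage of your induction the multiset argument yields $[Y]=\tau$ for \emph{every} smooth $Y$ of degree $e-1$ (choosing $H$ and $F$ generally for that $Y$), so full constancy propagates and any pencil may be used at the next stage; both repairs are routine and your overall strategy then goes through.
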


Here by very general hypersurfaces we 
mean pairs
of hypersurfaces corresponding to points in the parameter
space $\P(H^0(\P^{n+1}, \OO(d)))^2$ lying in the complement
of a countable union of divisors.

Thus the only case when smooth hypersurfaces of given
degree and dimension are stably birational to each other
is when they are all stably rational.
This happens in degrees one and two, and for
cubic surfaces, and it is widely expected that 
no other such cases exist.

It has been proved by Totaro \cite{Totaro}
that in every dimension $n \ge 3$
very general Fano hypersurfaces of degree $d \ge 2\lceil\frac{n+2}{3}\rceil$ are stably irrational. 
Schreieder improved Totaro's bound to
$d \ge \log_2(n) + 2$ \cite{Schreieder}.
Using \cite[Corollary 1.2]{Schreieder} and the Theorem above we deduce the following.

\begin{corollary}
For $n \ge 3$ and $d \ge \log_2(n) + 2$, very general
hypersurfaces of dimension $n$ and degree $d$ are not stably birational
to each other.
\end{corollary}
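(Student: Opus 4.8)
The plan is to read off the corollary as a direct consequence of Schreieder's stable irrationality bound and the Theorem above. Schreieder's \cite[Corollary 1.2]{Schreieder} asserts that for $n \ge 3$ and $d \ge \log_2(n) + 2$ a very general hypersurface of dimension $n$ and degree $d$ is stably irrational; in particular, for each such pair $(n,d)$ there exists at least one stably irrational smooth projective hypersurface of that dimension and degree. When in addition $d \le n+1$ the hypersurface is Fano, so the existence hypothesis of Theorem~\ref{thm:hypers} is met, and that Theorem immediately yields that very general hypersurfaces of dimension $n$ and degree $d$ are not stably birational to each other.

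Before invoking the Theorem I would check that the admissible range is nonempty, i.e. that the Fano constraint $d \le n+1$ is compatible with the lower bound $d \ge \log_2(n) + 2$. This amounts to $\log_2(n) + 2 \le n+1$, equivalently $\log_2(n) \le n-1$, which holds for all $n \ge 1$ and a fortiori for $n \ge 3$. Hence for every $n \ge 3$ the interval $\log_2(n) + 2 \le d \le n+1$ is nonempty and Theorem~\ref{thm:hypers} applies at each of its integer points.

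It then remains to treat the non-Fano tail $d \ge n+2$, which is not covered by Theorem~\ref{thm:hypers} and which I would handle by an elementary argument. For $d \ge n+2$ the canonical bundle $K_X = \OO_X(d-n-2)$ is nef, so a smooth hypersurface $X$ of dimension $n \ge 3$ is not uniruled. For non-uniruled varieties the maximal rationally connected fibration of $X \times \P^m$ recovers $X$ up to birational equivalence, so stable birational equivalence of two such hypersurfaces coincides with birational equivalence; and two very general hypersurfaces of the same degree $d \ge n+2$ and dimension $n \ge 3$ are not birational, since a birational map between them would force an isomorphism of their (pluri)canonical models and hence projective equivalence, which fails for a very general pair.

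The main point is that there is essentially no obstacle once Theorem~\ref{thm:hypers} is available: in the Fano range the statement is a pure combination of two quoted results, and the only genuine verifications are the numerical compatibility $\log_2(n)+2 \le n+1$ and the standard non-uniruledness reduction for $d \ge n+2$. The one place demanding a little care is the claim that very general high-degree hypersurfaces are pairwise non-birational, especially in the Calabi--Yau case $d = n+2$; here the Picard rank one property of a very general hypersurface of dimension $\ge 3$ rules out nontrivial flops and again reduces non-stable-birationality to non-projective-equivalence of very general members.
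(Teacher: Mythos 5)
Your proposal is correct, and in the Fano range it is exactly the paper's argument: the paper proves this Corollary in a single sentence by combining Schreieder's \cite[Corollary 1.2]{Schreieder} (which supplies, for $n \ge 3$ and $d \ge \log_2(n)+2$, a stably irrational smooth hypersurface of dimension $n$ and degree $d$) with Theorem \ref{thm:hypers}; your check that $\log_2(n)+2 \le n+1$ holds for all $n \ge 3$ is the unstated numerical compatibility behind that deduction. Where you genuinely go beyond the paper is the non-Fano tail $d \ge n+2$, which Theorem \ref{thm:hypers} does not cover and which the paper's one-line proof leaves implicit as classical: you observe that for $d \ge n+2$ the canonical bundle $\OO_X(d-n-2)$ is nef, so the hypersurfaces are not uniruled; that for non-uniruled varieties stable birationality reduces to birationality because the MRC quotient of $X \times \P^m$ is $X$ (this uses Graber--Harris--Starr to know the MRC base is not uniruled, and birational invariance of the MRC quotient); and that birationality of two such hypersurfaces forces projective equivalence --- via canonical models when $d \ge n+3$, and via the standard fact that birational minimal models of Picard rank one are isomorphic in the Calabi--Yau case $d=n+2$ (Lefschetz gives $\rho = 1$ for $n \ge 3$) --- which fails for a very general pair by comparing $\dim \mathrm{PGL}_{n+2}$ with the dimension of the parameter space. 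This addition buys completeness of the statement as literally formulated (no upper bound on $d$), at the cost of invoking standard but nontrivial birational geometry that the paper never needs in its intended Fano range; conversely, the paper's approach is purely a citation of its main theorem, whose whole content (degeneration to a hyperplane arrangement, the specialization map, and Larsen--Lunts) lives in the range $d \le n+1$.
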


In particular we see that there are uncountably many stable birational
types of such hypersurfaces.
The first interesting case when Corollary applies
is that of quartic threefolds ($n = 3$, $d = 4$, here stable irrationality of the very general member follows from \cite{CTP}).

Under the assumptions of the Theorem every stable birational
type is attained at a countable union of Zariski closed subsets in the parameter space of smooth hypersurfaces.
A more explicit description of which hypersurfaces of fixed
dimension and degree would be stably birational to the given one, seems
completely out of reach. 

Our approach to stable birational types relies on the Grothendieck ring of varieties, the Larsen-Lunts Theorem \cite{LL}
and the specialization map \cite{NS, KT}.
Firstly, we reformulate results of \cite{NS} 
by introducing the idea of a variation of stable birational
types and show that if stable birational type in a family is not
constant, then it has to vary in a strong sense (Theorem \ref{thm:variation}). 
Then, by constructing an appropriate
degeneration of smooth hypersurfaces to a hyperplane
arrangement, with desingularized total space (Lemma \ref{lemma:model}) 
and showing that
the class of this hyperplane arrangement in the Grothendieck ring
is congruent to $1$ modulo $\L$ (Lemma \ref{lem:hyperp})
we deduce that under the conditions
of the theorem, stable birational types of hypersurfaces can not be constant
(Theorem \ref{thm:hypers}).
The same method would apply to any family that has a 
smooth stably
irrational member alongside a smooth 
stably rational member, or more generally,
a member with mild singularities and whose class in the Grothendieck
modulo $\L$ is equal to one, and
provided that the total space of the degeneration
is smooth or has mild singularities.


In addition to using the Grothendieck ring of varieties and
the specialization map, one novelty of this work is
making use of degeneration
of a hypersurface to a hyperplane arrangement.
Such degenerations are ubiquitous in algebraic
geometry, starting from computing the genus of a plane curve
and all the way
to the modern Gross-Siebert program. 
These degenerations also played their role
in rationality problems \cite{CTO}.
Our contribution however is the direct
link between having a semistable
fiber in a family and variation of
the stable birational types
of the smooth fibers.
One familiar example of this behaviour
is that an isotrivial elliptic surface 
can not have semistable fibers.
This well-known fact is an easy corollary
of Proposition \ref{prop:snc}.

In the Appendix to this paper Claire Voisin proves a similar result
regarding variation of stable birational types in a slighly different setting
using decomposition of diagonal and unramified cohomology.
Very soon after appearance of this work, Stefan Schreieder gave a different
proof of Corollary 1.3 using degeneration to hyperplane arrangement and decomposition of the diagonal, also relying on 
\cite{Schreieder-var}; in fact
Schreieder's proof
does not use resolution of singularities and thus 
generalizes the statement to a field of an arbitrary characteristic.

Finally we note that
unlike in Hodge theory, 
where the term ``variation"
can be understood using the period map
between moduli spaces, 
our
term ``variation of stable birational types" 
has a very naive meaning;
it is not at all clear
how one could introduce a reasonable moduli space
of stable birational types.

\subsection*{Acknowledgements}

The author would like to thank Adel Betina, 
Christian B\"ohning,
Jean-Louis Colliot-Th\'el\`ene,
Sergey Galkin, Alexander Kuznetsov, 
Johannes Nicaise, Alexander Pukhlikov, Claire Voisin, Stefan Schreieder, Konstantin Shramov
for discussions and encouragement.
The idea of using degeneration to a hyperplane
arrangement, as opposed to a nodal hypersurface is due to 
an e-mail correspondence with Sergey Galkin.
A suggested simplification in the proof of Lemma \ref{lemma:model}, as explained
in Remark \ref{rem:log} is due to the referee.

The author was partially supported by 
Laboratory of Mirror Symmetry NRU HSE, RF government grant, ag. N~14.641.31.0001.

\medskip


\medskip

\subsection*{Notation}
By a variety we mean a separated irreducible and reduced scheme
of finite type over $k$.
By a point of a variety we a mean a closed point.
We say that a property holds for very general points of a variety
if it holds away from a countable union of divisors.

\section{Preliminary results}

\subsection{Grothendieck ring of varieties}

Recall that the Grothendieck ring of varieties $K_0(Var/k)$
is generated as an abelian group by isomorphism 
classes $[X]$ of schemes of finite type $X/k$ modulo the scissor relations
\[
[X] = [U] + [Z]
\]
for every closed $Z \subset X$ with open complement $U \subset X$.
The product structure on $K_0(Var/k)$ is induced by product of schemes. We write $\L \in K_0(Var/k)$
for the class of the affine line $[\A^1]$.

The following lemma is useful when degenerating smooth varieties to
hyperplane arrangements.

\begin{lemma}\label{lem:hyperp}
Let $H_1, \dots, H_r \subset \P^{n+1}$ be a collection of distinct 
hyperplanes in $\P^{n+1}$ such that $\bigcup_{i=1}^r H_i$ is a simple normal crossing divisor,
that is we assume that any intersection of $k$ hyperplanes is either empty or of codimension
$k$. Then we have
\[\;
[H_1 \cup \dots \cup H_r] = \sum_{j=0}^n (-1)^j \binom{r}{j+1} [\P^{n-j}],
\]
and if $r \le n + 1$, then $[H_1 \cup \dots \cup H_r] \equiv 1 \mod{\L}$.
\end{lemma}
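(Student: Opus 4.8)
The plan is to compute the class $[H_1 \cup \dots \cup H_r]$ by motivic inclusion–exclusion and then reduce modulo $\L$. First I would record the inclusion–exclusion identity in $K_0(Var/k)$: for reduced closed subschemes $H_1, \dots, H_r$ of a variety one has
\[
[H_1 \cup \dots \cup H_r] = \sum_{\emptyset \neq S \subseteq \{1,\dots,r\}} (-1)^{|S|+1}\Big[\bigcap_{i \in S} H_i\Big].
\]
This follows from the scissor relations $[X] = [U] + [Z]$ by induction on $r$, the base case $r = 2$ being $[A \cup B] = [A] + [B] - [A \cap B]$, obtained by writing $A \cup B = A \sqcup (B \setminus (A \cap B))$ with $A$ closed and $B \setminus (A \cap B)$ open in $B$.

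The next step is to identify each intersection $\bigcap_{i \in S} H_i$ with $|S| = k$. By the simple normal crossing hypothesis such an intersection, when nonempty, has codimension $k$, and being an intersection of hyperplanes it is then a linear subspace $\cong \P^{n+1-k}$. Two elementary observations pin down which intersections occur: in $\P^{n+1}$ any system of $k \le n+1$ linear equations has a nonzero solution, so every intersection of at most $n+1$ of the hyperplanes is nonempty; and a codimension-$k$ subspace with $k \ge n+2$ cannot exist in $\P^{n+1}$, so every intersection of at least $n+2$ hyperplanes is empty. Thus for $k \le n+1$ all $\binom{r}{k}$ intersections of size $k$ equal $\P^{n+1-k}$, and for $k \ge n+2$ they vanish. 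The one point that genuinely uses the ambient geometry — and the only step I expect to require care — is the nonemptiness for $k \le n+1$, since it ensures that every one of the $\binom{r}{k}$ terms contributes, not merely some of them. Reindexing by $j = k-1$ then converts the inclusion–exclusion sum into
\[
[H_1 \cup \dots \cup H_r] = \sum_{j=0}^{n} (-1)^{j} \binom{r}{j+1} [\P^{n-j}],
\]
the upper limit $j = n$ being harmless since $\binom{r}{j+1} = 0$ for $j + 1 > r$.

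For the congruence I would use that $[\P^m] = 1 + \L + \dots + \L^m \equiv 1 \mod{\L}$ for every $m \ge 0$. Reducing the displayed formula term by term gives
\[
[H_1 \cup \dots \cup H_r] \equiv \sum_{j=0}^{n} (-1)^{j}\binom{r}{j+1} \mod{\L}.
\]
It remains to evaluate this integer sum under the hypothesis $r \le n+1$. Setting $m = j+1$ and using that $\binom{r}{m} = 0$ for $m > r$, the sum equals $-\sum_{m=1}^{r}(-1)^m \binom{r}{m} = -\big((1-1)^r - 1\big) = 1$ by the binomial theorem (for $r \ge 1$). Hence $[H_1 \cup \dots \cup H_r] \equiv 1 \mod{\L}$, which completes the argument. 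Apart from the nonemptiness observation above, every step is routine bookkeeping, so I do not anticipate any serious obstacle.
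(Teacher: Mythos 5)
Your proof is correct, and the congruence step at the end is identical to the paper's; but the way you establish the class formula is genuinely different in execution. The paper argues by induction on $(r,n)$: writing $P_{r,n}$ for the class of an SNC arrangement of $r$ hyperplanes in $\P^{n+1}$, it peels off one hyperplane at a time via the three-term relation $P_{r,n} = P_{r-1,n} + [\P^n] - P_{r-1,n-1}$, whose only geometric input is that restricting the other $r-1$ hyperplanes to the last one yields an SNC arrangement of $r-1$ hyperplanes in $\P^n$; the binomial formula then drops out of the recursion. You instead run the full subset-wise inclusion--exclusion in $K_0(Var/k)$ and identify every stratum explicitly, which obliges you to supply two geometric facts the paper never states: that every intersection of $k \le n+1$ of the hyperplanes is nonempty (hence, by the SNC hypothesis and linearity, a $\P^{n+1-k}$), and that intersections of $k \ge n+2$ of them are forced to be empty. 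You correctly flag the nonemptiness claim as the one point needing care, and your linear-algebra justification ($k$ linear equations in $n+2$ variables have a nonzero common solution when $k \le n+1$) is right; this is exactly what guarantees that the coefficient is the full $\binom{r}{k}$ rather than a count of nonempty strata. What each route buys: yours gives the closed formula in one pass and makes the combinatorial origin of the coefficients transparent; the paper's recursion needs less geometry per step and yields along the way that $P_{r,n}$ depends only on $(r,n)$ and not on the positions of the hyperplanes. Both proofs then finish the same way, reducing $[\P^m]\equiv 1 \mod{\L}$ and evaluating $\sum_{m=1}^{r}(-1)^{m-1}\binom{r}{m}=1$, with the hypothesis $r \le n+1$ used in both for the same purpose, namely to ensure the truncation at $j=n$ discards no nonzero binomial term.
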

\begin{proof}
Let $P_{r,n} \in K_0(Var/k)$ be the class of a simple normal crossing hyperplane arrangement of $r$ hyperplanes in $\P^{n+1}$
in the Grothendieck ring of varieties. It follows from the inductive argument
below that the class $P_{r,n}$ only depends on $r$ and $n$ and not on the relative
positions of the hyperplanes.

We prove the formula for $P_{r,n}$ for
$r \ge 1$, $n \ge 0$ using induction.
For the induction base we have for all $r \ge 1$,
$P_{r,0} = r$ ($r$ points in $\P^1$)
and for all $n \ge 0$ we have $P_{1,n} = [\P^n]$. 
We assume that the formula is true for $P_{r,n-1}$ and $P_{r-1,n-1}$.
Given $r \ge 2$ hyperplanes in $\P^{n+1}$, intersecting the first 
$r - 1$ of them with the last one, gives rise to
an arrangement of $r-1$ 
hyperplanes in $\P^n$, which is still simple normal 
crossing. Using inclusion-exclusion
we obtain
\[
P_{r,n} = P_{r-1,n} + [\P^n] - P_{r-1,n-1},
\]
which by induction hypothesis can be rewritten as
\[
P_{r,n} = \sum_{j=0}^n (-1)^j \binom{r-1}{j+1} [\P^{n-j}]
+[\P^n] - \sum_{i=0}^{n-1} (-1)^i \binom{r-1}{i+1} [\P^{n-i-1}]
\]
which easily gives the desired result.

Finally, if $r \le n+1$, then 
\[
P_{r,n} \equiv \sum_{j=0}^n (-1)^j \binom{r}{j+1} = 
\sum_{i=1}^{r} (-1)^{i-1} \binom{r}{i} = 
1 \mod{\L}.
\]
\end{proof}

\subsection{Resolution of one toric singularity}

When constructing resolutions of singularities
for the total space of a degeneration of hypersurfaces
the following result is useful.
We refer to \cite{Fulton} for standard facts and constructions
from toric geometry.

\begin{lemma}\label{lem:desing}
Let be $\XX$ be a hypersurface in $\A^{n+2}$
defined by equation 
\[
t \cdot y = z_1 \cdots z_n
\]
and let $\pi: \XX \to \A^1$ be the morphism given by the $t$ coordinate.

(1) Let $N = \Z^{n+1}$ with the standard basis $e_1, \dots, e_{n+1}$
and let $N_\R = N \otimes \R$.
For every $1 \le i \le n$ let $f_i = e_i + e_{n+1} \in N$.
Let $\sigma \subset N_\R = \R^{n+1}$ be the cone
generated by the vectors $e_1, \dots, e_n$, $f_1, \dots, f_n$.
Then $\XX$ is the toric variety corresponding to the cone $\sigma$,
that is $\XX = \Spec(k[N^\vee \cap \sigma^\vee])$.

(2) Subdivision of $\sigma$ into $n$ cones
\[
\sigma_k := 
\R_{\ge 0} f_1 + \dots + \R_{\ge 0} f_k + \R_{\ge 0} e_k + \dots + 
\R_{\ge 0}e_n \subset N_\R, \quad
1 \le k \le n
\]
provides a resolution of singularities $\tau: \wt{\XX} \to \XX$.
The composition $\wt{\pi} := \pi \circ \tau$ has a
reduced simple normal crossing fiber over $0 \in \A^1$.

(3) Explicitly desingularization $\tau$ is obtained by a sequence 
of blow ups of proper preimages of $n-1$ Weil divisors 
$V(t, z_1), \dots, V(t,z_{n-1}) \subset \XX$.
\end{lemma}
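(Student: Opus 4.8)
The plan is to prove the three parts in turn: (1) as a direct computation in the character semigroup of the affine toric variety, (2) as the verification that the proposed cones form a smooth subdivision together with an analysis of the central fibre, and (3) as an explicit sequence of blow-ups in affine charts. For (1) I would set $M = N^\vee$ with dual basis $e_1^*, \dots, e_{n+1}^*$ and first compute $\sigma^\vee$: pairing the generators of $\sigma$ against a general $m = \sum a_i e_i^*$ gives $\sigma^\vee = \{a_i \ge 0,\ a_i + a_{n+1} \ge 0 \ (1 \le i \le n)\}$. I would then identify the coordinate functions with characters, $z_i = \chi^{e_i^*}$, $t = \chi^{e_{n+1}^*}$ and $y = \chi^{e_1^* + \dots + e_n^* - e_{n+1}^*}$, all of which lie in $\sigma^\vee \cap M$ and satisfy the single relation $ty = z_1 \cdots z_n$. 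The remaining point is that these characters generate the semigroup $\sigma^\vee \cap M$: writing a lattice point of $\sigma^\vee$ according to the sign of $a_{n+1}$ exhibits it as a nonnegative combination of $t, z_1, \dots, z_n$ (when $a_{n+1} \ge 0$) or of $y, z_1, \dots, z_n$ (when $a_{n+1} < 0$, using $a_i \ge -a_{n+1}$). This yields the isomorphism $\XX \cong \Spec(k[\sigma^\vee \cap M])$.

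For (2) the key is to make the subdivision transparent through inequality descriptions. Writing $v = (v_1, \dots, v_{n+1})$, I would show $\sigma = \{v_i \ge 0 \ (1 \le i \le n),\ 0 \le v_{n+1} \le v_1 + \dots + v_n\}$ and similarly
\[
\sigma_k = \{\, v_i \ge 0 \ (1 \le i \le n),\ v_1 + \dots + v_{k-1} \le v_{n+1} \le v_1 + \dots + v_k \,\}.
\]
The intervals $[v_1 + \dots + v_{k-1},\ v_1 + \dots + v_k]$ for $1 \le k \le n$ tile $[0,\ v_1 + \dots + v_n]$ and overlap only at their endpoints, so the $\sigma_k$ cover $\sigma$, have disjoint interiors, and meet along the common facets $\{v_{n+1} = v_1 + \dots + v_k\}$; this is exactly the assertion that $\{\sigma_k\}$ is a subdivision of $\sigma$. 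Smoothness of each $\sigma_k$ follows from a short linear-algebra check: its generators $f_1, \dots, f_k, e_k, \dots, e_n$ form a $\Z$-basis of $N$, since $e_{n+1} = f_k - e_k$ and $e_i = f_i - f_k + e_k$ for $i < k$ recover the standard basis over $\Z$. As the subdivision introduces no new rays, the induced proper birational morphism $\tau : \wt{\XX} \to \XX$ has $\wt{\XX}$ smooth, hence is a resolution.

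For the central fibre, note that $\wt{\pi}^{-1}(0) = \{t = 0\}$, and since $t = \chi^{e_{n+1}^*}$ the order of vanishing of $t$ along the toric divisor $D_\rho$ equals $\langle e_{n+1}^*, v_\rho \rangle$; this is $1$ for each ray $f_i$ and $0$ for each ray $e_i$. Therefore $\wt{\pi}^{-1}(0) = D_{f_1} + \dots + D_{f_n}$ with all multiplicities equal to one, so the fibre is reduced, and it is simple normal crossing because the toric boundary of the smooth variety $\wt{\XX}$ is.

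For (3) I would run the blow-ups in affine coordinates: blowing up the ideal $(t, z_1)$ in $\A^{n+2}$ and passing to the proper transform of $\XX$, one chart (where $z_1 = tw$) becomes smooth, equal to $\{y = w\, z_2 \cdots z_n\}$, while the other (where $t = z_1 s$) is $\A^1 \times \{s y = z_2 \cdots z_n\}$, the same singularity with one fewer factor. Iterating with the proper transforms of $V(t, z_2), \dots, V(t, z_{n-1})$ lowers the number of factors to one after $n-1$ steps, so $\XX$ is resolved; comparing the affine charts with the cones $\sigma_k$ then identifies this resolution with $\tau$. I expect the main obstacle to be the bookkeeping in (2): pinning down the inequality descriptions so that the covering and the face-matching of the $\sigma_k$ become manifest, after which the smoothness, the fibre analysis, and the blow-up description in (3) are all routine.
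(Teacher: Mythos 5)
Your proposal is correct and follows essentially the same route as the paper: the same dual-cone computation with case analysis on the sign of $a_{n+1}$ for (1), the same inequality description of $\sigma$ and the tiling of $[0, v_1+\dots+v_n]$ for the subdivision in (2), and the same two-chart blow-up recursion for (3). The only (cosmetic) difference is in verifying that the central fiber is reduced SNC: you compute the multiplicities $\langle e_{n+1}^*, v_\rho\rangle$ of $t$ along the toric divisors globally, while the paper checks the equivalent condition chart by chart, namely that each generator of $\sigma_k$ has last coordinate $0$ or $1$.
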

\begin{proof}
The proof is a standard computation in toric geometry.

(1) Let $M = N^\vee$ be the dual lattice
with the dual basis $e_1^*, \dots, e_{n+1}^*$.
The dual cone $\sigma^\vee \subset M_\R$
is described by the system of inequalities for $(a_1, \dots, a_{n+1}) \in M_\R$:
\[
a_1 \ge 0, \; \dots, \; a_n \ge 0, 
\]
\[
a_1 + a_{n+1} \ge 0, \; \dots, \; a_n + a_{n+1} \ge 0.
\]

It is clear that the $n+2$ vectors
\[
e_1^*, \dots, e_n^*, e_{n+1}^*, e_1^* + \dots + e_n^* - e_{n+1}^* \in M
\]
all satisfy these inequalities, and every integral point in $\sigma^\vee$
can be written as a non-negative integer combination of these vectors
(indeed, if $a_{n+1} \ge 0$, then we are done, while if $a_{n+1} < 0$, 
all other coordinates must be positive and a multiple of
$e_1^* + \dots + e_n^* - e_{n+1}^*$ can be subtracted).

If we set 
\[
z_1, \dots, z_n, t, y
\]
to be the monomials corresponding to the vectors above, they satisfy
a single relation $ty = z_1 \cdots z_n$.

(2) The cone $\sigma$ combinatorially is a cone
over a prism $\Delta^{n-1} \times [0,1]$ ($\Delta$ is a simplex), 
and the subdivision we consider corresponds to
a standard subdivision of this prism into $n$ simplices.
To describe this construction in detail note that we have seen that the dual
cone $\sigma^\vee$ is generated by $e_1^*, \dots, e_n^*, e_{n+1}^*, e_1^* + \dots + e_n^* - e_{n+1}^* \in M$, so that the cone $\sigma \subset N_\R$
is the set of solutions of
\[
x_1 \ge 0, \; \dots, \; x_{n+1} \ge 0, \quad x_{n+1} \le x_1 + \dots + x_n.
\]
For every $1 \le k \le n$ let us consider the cones given by 
\[
x_1 \ge 0, \; \dots, \; x_{n+1} \ge 0, \quad x_1 + \dots + x_{k-1} \le
x_{n+1} \le x_1 + \dots + x_k.
\]
These cones obviously form a partition of $\sigma$ and it is easy
to see that these are precisely the cones $\sigma_k$ with boundary
rays generated by $f_1, \dots, f_k, e_k, \dots, e_n$.

The new fan, consisting of the cones $\sigma_k$ and all their faces
has its cones generated by basis vectors of the lattice $N$, hence the corresponding
morphism $\tau: \wt{\XX} \to \XX$ is a resolution of singularities.

To check that the fiber $\wt{\pi}$ is reduced 
simple normal crossing over $0 \in \A^1$, we
consider each affine toric chart $U_k$, corresponding to the cone $\sigma_k$.
By our choice of coordinates the restriction of $\wt{\pi}$ to $U_k$
corresponds to the projection onto the last coordinate 
$N_\R = \R^{n+1} \to \R$. Fiber over $0 \in \A^1$ being a 
reduced simple normal crossing divisor
in $U_k$ translates into the fact that every vector
$f_1, \dots, f_k, e_k, \dots, e_n$ has zero or one as its last coordinate.

(3) Let us describe the effect of the blow up of $V(t,z_1) \subset \XX$ on our toric model.
We have two open charts, on the first open chart which we call $U_1$ 
we have $z_1 = t z_1'$
so the coordinates are $t, y, z_1', z_2, \dots, z_n$
and the equation is
\[
y = z_1' z_2 \cdots z_n.
\]
On the other open chart which we call $\XX'$ 
we have $t = z_1 t'$ so coordinates are
$t', y, z_1, \dots, z_n$
and the equation is
\[
t'y = z_2 \cdots z_n.
\]
The gluing between the two open charts is $t' = \frac1{z_1'}$.
We see that $U_1$ is the affine space
with coordinates $z_1', z_2, \dots, z_n$ which are monomials
corresponding to vectors
\[
e_1^* - e_{n+1}^*, e_2^*, \dots, e_n^*, e_{n+1}^*,
\]
which is precisely the generators for the dual cone to $\sigma_1$,
while $\XX'$ has coordinates corresponding to the monomials
\[
e_{n+1}^* - e_1^*, e_1^*, \dots, e_n^*, e_1^* + \dots + e_n^* - e_{n+1}^*,
\]
and it follows easily that $\XX'$ 
is the toric variety corresponding to the cone
\[
\sigma' := \sigma_2 \cup \dots \cup \sigma_n = \sum_{i=2}^n \R_{\ge 0} e_i
+ \sum_{i=1}^n \R_{\ge 0} f_i \subset N_\R.
\]
Furthermore $\XX'$ is the product of 
$\A^1$ ($z_1$ coordinate) with
the same model in $n-1$ variables.

Since $U_1$ is already smooth, it will not be affected
by further blow ups of divisors, 
while the proper preimage of $V(t,z_k)$ (for $k \ge 2$)
in $\XX'$ is $V(t',z_k)$, and the same argument can be applied
to $\XX'$ to get open charts $U_2, \dots, U_n$, corresponding
to the subdivision of $\sigma'$.

The process terminates after $n-1$ steps, when two smooth 
charts are produced.
\end{proof}

\section{Stable birational types of hypersurfaces}

\subsection{Variation of stable birational types}

We recall the following result of Larsen and Lunts which holds over 
arbitrary fields of characteristic zero and which
provides the link
between birational geometry and the Grothendieck ring of
varieties.

\begin{theorem}\cite{LL}\label{thm:LL}
If $X$ and $Y$ are smooth projective varieties with
classes $[X], [Y] \in K_0(Var/k)$, then $X$
and $Y$ are stably birational if and only if 
\[
[X] \equiv [Y] \mod{\L}.
\]
\end{theorem}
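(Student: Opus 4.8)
The plan is to realize the stable birational class as a ring-valued invariant of the Grothendieck ring that is insensitive to multiplication by $\L$. Write $SB$ for the commutative monoid of stable birational equivalence classes of smooth projective $k$-varieties, with product induced by $\times$ and unit the class of a point, and let $\Z[SB]$ be the associated monoid ring, which is free abelian on the set $SB$. I would construct a ring homomorphism $\Phi \colon K_0(Var/k) \to \Z[SB]$ sending the class $[X]$ of a smooth projective variety to the basis element $[X]_{sb}$ indexing its stable birational class, show that $\Phi(\L) = 0$, and combine this with the weak factorization theorem to obtain both implications. The two directions are then nearly disjoint in the tools they require: the ``only if'' is a direct computation in $K_0(Var/k)$, while the ``if'' is where $\Phi$ does the work.

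For the ``only if'' direction, suppose $X$ and $Y$ are stably birational, so that $X \times \P^m$ and $Y \times \P^n$ are birational smooth projective varieties for some $m, n \ge 0$. First I would record that birational smooth projective varieties have equal class modulo $\L$: by weak factorization any such birational map factors as a chain of blow-ups and blow-downs with smooth centers, and the blow-up relation $[\mathrm{Bl}_Z W] = [W] - [Z] + [E]$, together with the fact that the exceptional divisor $E$ is a Zariski-locally trivial $\P^{c-1}$-bundle over the center $Z$ (so that $[E] = [Z](1 + \L + \dots + \L^{c-1}) \equiv [Z] \pmod{\L}$), gives $[\mathrm{Bl}_Z W] \equiv [W] \pmod{\L}$. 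Hence $[X \times \P^m] \equiv [Y \times \P^n] \pmod{\L}$, and since $[X \times \P^m] = [X](1 + \L + \dots + \L^m) \equiv [X] \pmod{\L}$ and likewise for $Y$, we conclude $[X] \equiv [Y] \pmod{\L}$.

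For the converse I would produce $\Phi$ from Bittner's presentation of $K_0(Var/k)$, valid in characteristic zero, by which the ring is generated by classes of smooth projective varieties subject only to $[\emptyset] = 0$ and the blow-up relation above. Setting $\Phi([X]) = [X]_{sb}$ on generators, well-definedness reduces to checking that relation: $\mathrm{Bl}_Z W$ is birational to $W$, so $[\mathrm{Bl}_Z W]_{sb} = [W]_{sb}$, while $E = \P(N_{Z/W})$ is birational to $Z \times \P^{c-1}$ and hence stably birational to $Z$, so $[E]_{sb} = [Z]_{sb}$, and the relation becomes an identity in $\Z[SB]$; multiplicativity holds because the stable birational class of a product depends only on the classes of the factors. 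Since $\P^1$ is rational, $\Phi(\L) = \Phi([\P^1]) - \Phi([\mathrm{pt}]) = 1 - 1 = 0$, so $\Phi$ descends to $K_0(Var/k)/(\L)$. Now if $[X] \equiv [Y] \pmod{\L}$, then $\Phi([X]) = \Phi([Y])$, i.e. $[X]_{sb} = [Y]_{sb}$; as these are single basis vectors of a free abelian group on $SB$, they must index the same stable birational class, so $X$ and $Y$ are stably birational.

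Once $\Phi$ is available the argument is entirely formal, so the main obstacle is precisely the existence and well-definedness of $\Phi$, which is the content of Bittner's presentation and rests on resolution of singularities and the weak factorization theorem; these are the only genuinely deep inputs and the reason the characteristic-zero hypothesis is needed. Everything else — the projective bundle formula modulo $\L$, the behaviour of products under stable birational equivalence, and the vanishing $\Phi(\L) = 0$ — is elementary bookkeeping in $K_0(Var/k)$ and $\Z[SB]$.
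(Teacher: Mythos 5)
Your proof is correct and takes essentially the same route as the source the paper cites for this theorem (Larsen--Lunts \cite{LL}): one realizes the stable birational class as a motivic measure $K_0(Var/k)\to \Z[SB]$ killing $\L$ — constructed via weak factorization, or equivalently Bittner's presentation as you do — and then uses freeness of $\Z[SB]$ on the set of stable birational classes for the converse, with the easy direction handled by the blow-up and projective-bundle computations modulo $\L$. Since the paper itself only cites \cite{LL} rather than reproving the statement, there is nothing further to compare.
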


Thus for a smooth projective variety $X$ the element
$[X] \in K_0(Var/k)/(\L)$
encodes the stable birational class of $X$.

We now introduce the idea of the variation of stable birational types
in the smooth and simple normal crossing settings.
These rely on the results of \cite{NS}.

\begin{theorem}\label{thm:variation}
Let $S$ be a variety and let
$\pi: \XX \to S$ be a smooth 
proper morphism with connected fibers. Then one of the following is true:
\begin{enumerate}
    \item[(a)] {\bf Constant stable birational type:} all fibers $\pi^{-1}(t)$, 
    $t \in S$ are stably birational.
    \item[(b)] {\bf Variation of stable birational type:}
    for very general points $(t, t') \in S \times S$ the fibers $\pi^{-1}(t)$ and
    $\pi^{-1}(t')$ are not stably birational to
    each other.
\end{enumerate}
\end{theorem}

\begin{proof}
For $i = 1, 2$ let us write 
$p_i: S \times S \to S$ for the two projections.
Let $\pi_i: \XX_i \to S \times S$ 
denote the base change of $\pi$ by $p_i$. Thus $\pi_1, \pi_2$
are smooth proper morphisms. Let $Z \subset S \times S$ 
be the set of points where the fibers
of $\pi_1$ and $\pi_2$ are stably birational,
in other words $Z$ consists of points $(t_1, t_2)$
such that $\pi^{-1}(t_1)$ and $\pi^{-1}(t_2)$
are stably birational.

By
\cite[Theorem 4.1.4]{NS}, $Z$ is a countable union of Zariski closed
subsets of $S \times S$.
Thus either $Z = S \times S$, which corresponds 
to the case (a), or $Z \subsetneq S \times S$,
so that points in $S \times S \setminus Z$
are very general which corresponds to (b).
\end{proof}

The next Proposition provides a generalization of
the Theorem above to simple normal crossing singularities.
Instead of stable birational types we work
with classes in $K_0(Var/k)/(\L)$.

\begin{proposition}\label{prop:snc}
Let $C$ be a smooth connected curve and let 
$\pi: \XX \to C$ be a flat proper morphism with connected fibers and smooth total space $\XX$. 
Let $0 \in C$, and assume that
the restriction of $\pi$ to $C \setminus 0$
is smooth, and that $\pi^{-1}(0)$ is reduced simple
normal crossing.

If all fibers $\pi^{-1}(t)$ for $t \ne 0$
are stably birational to a smooth
projective variety $X$, then the class of the 
central fiber satisfies
\[\;
[\pi^{-1}(0)] \equiv [X] \mod{\L}.
\]
\end{proposition}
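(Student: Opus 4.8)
The plan is to reduce to a one-parameter specialization over the local ring at $0$ and to invoke the specialization map of \cite{NS}, the point being that a regular model with reduced simple normal crossing special fiber computes the specialization of the generic fiber as the class of the special fiber modulo $\L$. Set $R = \OO_{C,0}$, a discrete valuation ring with fraction field $K = k(C)$ and residue field $k$ (here $0$ is a $k$-rational closed point since $k$ is algebraically closed). Let $X_\eta = \XX \times_C \Spec K$ be the generic fiber, a smooth proper $K$-variety, and note that $\XX$ being smooth over $k$ makes $\XX \times_C \Spec R$ a regular proper model of $X_\eta$ whose special fiber $\pi^{-1}(0)$ is reduced simple normal crossing by hypothesis.

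First I would upgrade the stable-birational hypothesis from closed fibers to the generic fiber. Arguing as in the proof of Theorem \ref{thm:variation}, the locus $W \subset C$ of points $t$ for which $\pi^{-1}(t)$ is stably birational to $X$ is, by the constructibility statement \cite[Theorem 4.1.4]{NS}, a countable union of Zariski closed subsets of $C$. By assumption $W$ contains every closed point of $C \setminus 0$; since $k$ is uncountable and $C$ is an irreducible curve, a countable union of \emph{proper} closed (hence finite) subsets cannot contain cofinitely many closed points, so one of the closed subsets must be all of $C$. Thus $W = C$ contains the generic point, which is to say $X_\eta$ is stably birational over $K$ to the base change $X_K = X \times_k K$. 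By the Larsen--Lunts Theorem \ref{thm:LL} applied over $K$ this yields
\[
[X_\eta] \equiv [X_K] \mod{\L} \quad \text{in } K_0(Var/K)/(\L).
\]

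Next I would apply the specialization map $\mathrm{sp} \colon K_0(Var/K) \to K_0(Var/k)$ of \cite{NS}, which is a ring homomorphism sending $\L$ to $\L$ and therefore descends to $K_0(\cdot)/(\L)$. The constant model $X \times_k R$ shows $\mathrm{sp}([X_K]) \equiv [X] \mod{\L}$, while the regular semistable model $\XX \times_C \Spec R$ shows $\mathrm{sp}([X_\eta]) \equiv [\pi^{-1}(0)] \mod{\L}$: the specialization formula for a reduced simple normal crossing special fiber $\bigcup_i E_i$ expresses $\mathrm{sp}([X_\eta])$ as a sum over strata weighted by powers of $(1-\L)$, all of which become $1$ modulo $\L$, so by inclusion--exclusion the result is exactly the total class $[\pi^{-1}(0)]$ of the special fiber. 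Combining these three congruences gives $[\pi^{-1}(0)] \equiv [X] \mod{\L}$, as desired.

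The main obstacle is the correct invocation of the specialization machinery of \cite{NS}: one must verify that the smoothness of $\XX$ together with the reduced simple normal crossing hypothesis genuinely produces a regular model for which the special-fiber formula applies, and that the multiplicity weights in that formula collapse modulo $\L$. The passage to the generic point in the first step is comparatively soft, relying only on the constructibility of the stable-birational locus and the uncountability of $k$, but it is essential, since it is the generic fiber $X_\eta$ — not any individual closed fiber — on which the specialization map is evaluated.
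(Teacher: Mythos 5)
Your overall architecture is sensible, and its second half (Larsen--Lunts over the function field, the specialization ring homomorphism of \cite{NS} sending $\L$ to $\L$, and the collapse of the reduced snc special-fiber formula modulo $\L$ by inclusion--exclusion) is exactly the machinery underlying the result. But contrast this with the paper's proof, which is two lines: it forms the constant family $\pi'\colon X \times C \to C$ and applies \cite[Proposition 4.1.1]{NS} to the pair $(\pi,\pi')$ --- a proposition whose hypotheses (smooth total spaces, reduced snc special fibers, stably birational closed fibers away from $0$) are tailor-made for this situation. You instead re-derive that proposition from \cite[Theorem 4.1.4]{NS} plus the specialization map, and the re-derivation has a genuine gap.

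The gap is the sentence ``Thus $W = C$ contains the generic point, which is to say $X_\eta$ is stably birational over $K$ to the base change $X_K$.'' Theorem 4.1.4 of \cite{NS}, as it is used in the proof of Theorem \ref{thm:variation}, is a statement about closed points (recall the paper's convention that points are closed points), and it should in any case be applied over $C \setminus 0$, where $\pi$ is smooth and proper. Your uncountability argument does show that one of the countably many closed subsets is all of $C \setminus 0$; but membership of the generic point in the stably-birational locus is not even defined by the cited statement, and what the Hilbert-scheme argument behind such constructibility results actually yields at the generic point is a stable birational equivalence defined over some finite extension $K'/K$ (equivalently, between geometric generic fibers over $\overline{K}$), not over $K$ itself. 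Stable birationality does not in general descend along finite extensions of the ground field, so you cannot invoke Theorem \ref{thm:LL} over $K$ as written. If you instead work over $K'$, a second problem appears: $K'$ is the function field of a cover $C' \to C$ that may be ramified over $0$, and the pulled-back model $\XX \times_C C'$ is then no longer regular with reduced snc special fiber, so your step ``a regular semistable model computes $\mathrm{sp}$ modulo $\L$ as the class of the special fiber'' no longer applies as stated. Repairing this requires the insensitivity of the motivic volume of a semistable family to ramified base change (triviality of the monodromy action, or the log smooth formalism of \cite[Appendix A]{NS}); that is precisely the nontrivial content packaged inside \cite[Proposition 4.1.1]{NS}, which the paper cites directly. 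So either invoke that proposition --- your constant-family comparison then reproduces the paper's proof verbatim --- or supply the finite base-change argument explicitly.
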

\begin{proof}
Let us form a constant family
$\pi': X \times C \to C$.
By assumption the two morphisms $\pi$,
$\pi'$ have stably birational fibers
for $t \ne 0$.

Thus using \cite[Proposition 4.1.1]{NS} we deduce that fibers over
$t = 0$ satisfy
\[
[\pi^{-1}(0)] \equiv [\pi'^{-1}(0)] = [X] \mod \L.
\]
\end{proof}

\subsection{Application to hypersurfaces}

In this section we study stably birational
types of hypersurfaces $X \subset \P^{n+1}$.
The interesting case is the Fano case, that
is the case when the degree of $X$ satisfies $d \le n+1$.

\begin{theorem}\label{thm:hypers}
Assume that there exists a 
smooth projective hypersurface
of dimension $n$ and degree $d \le n+1$
which is stably irrational. 
Then smooth projective
hypersurfaces
of dimension $n$ and degree $d$
admit a variation of stable birational types,
that is two very general such hypersurfaces
are not stably birational to each other.
\end{theorem}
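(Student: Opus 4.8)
The plan is to apply the dichotomy of Theorem \ref{thm:variation} to the universal family of smooth hypersurfaces and then rule out the constant case (a) by exhibiting a degeneration to a hyperplane arrangement. Concretely, let $S \subset \P(H^0(\P^{n+1}, \OO(d)))$ be the open locus parametrizing smooth hypersurfaces and let $\pi: \XX \to S$ be the restriction of the universal family; this is a smooth proper morphism with connected fibers, so Theorem \ref{thm:variation} applies and gives either (a) all smooth hypersurfaces of dimension $n$ and degree $d$ are stably birational to each other, or (b) the desired variation. It therefore suffices to derive a contradiction from (a) together with the hypothesis that some smooth hypersurface $X$ of this dimension and degree is stably irrational: under (a) every smooth such hypersurface, including $X$, is stably birational to every other, so all of them are stably irrational, and in particular none has class $\equiv 1 \mod{\L}$ by Theorem \ref{thm:LL} applied with $\P^n$.

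To produce the contradiction I would degenerate a smooth hypersurface to a union of $d$ hyperplanes. Choose $d$ general hyperplanes $H_1, \dots, H_d \subset \P^{n+1}$, whose union is simple normal crossing because $d \le n+1$, together with a general form $G$ of degree $d$, and form the pencil $\{L_1 \cdots L_d + t G = 0\}$ over a curve $C \ni 0$, where $L_i$ cuts out $H_i$. The general member is a smooth hypersurface of dimension $n$ and degree $d$, while the fiber over $0$ is the arrangement $H_1 \cup \dots \cup H_d$. The total space is singular exactly where $G$ meets the singular strata of the arrangement; along a codimension-$m$ stratum $H_{i_1} \cap \dots \cap H_{i_m}$ on which $G$ vanishes transversally, the total space is étale-locally of the product form $t y = z_1 \cdots z_m$. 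This is precisely the singularity resolved by Lemma \ref{lem:desing}, which produces a modification $\wt{\pi}: \wt{\XX} \to C$ with smooth total space, smooth away from $0$, and reduced simple normal crossing central fiber; globalizing these local resolutions compatibly is the content of the model Lemma \ref{lemma:model}.

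With such a model in hand I would invoke Proposition \ref{prop:snc}. Under assumption (a) every fiber $\wt{\pi}^{-1}(t)$ for $t \ne 0$ is stably birational to $X$, so the proposition yields
\[
[\wt{\pi}^{-1}(0)] \equiv [X] \mod{\L}.
\]
It remains to compute the left-hand side. The resolution of Lemma \ref{lem:desing} is carried out by blow-ups of smooth centers whose fibers are points or projective lines, so it is $\L$-rational and does not change the class of the central fiber modulo $\L$; hence $[\wt{\pi}^{-1}(0)] \equiv [H_1 \cup \dots \cup H_d] \mod{\L}$. By Lemma \ref{lem:hyperp}, since $d \le n+1$, the arrangement satisfies $[H_1 \cup \dots \cup H_d] \equiv 1 \mod{\L}$. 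Combining these, $[X] \equiv 1 \mod{\L}$, so $X$ is stably rational by Theorem \ref{thm:LL}, contradicting the stable irrationality established above. Thus (a) cannot hold and (b) is the conclusion.

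The main obstacle is the construction and bookkeeping of the global model (Lemma \ref{lemma:model}): one must verify that for a general choice of the $H_i$ and of $G$ the total space acquires only the toric singularities $t y = z_1 \cdots z_m$ along the various strata, patch the toric resolutions of Lemma \ref{lem:desing} into a single proper modification with reduced simple normal crossing central fiber, and confirm that these blow-ups leave the class of the central fiber unchanged modulo $\L$. By contrast, once the good model exists the chain Proposition \ref{prop:snc}, then Lemma \ref{lem:hyperp}, then Theorem \ref{thm:LL} is essentially formal.
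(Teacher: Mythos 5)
Your proposal is correct and takes essentially the same route as the paper: the dichotomy of Theorem \ref{thm:variation} applied to the universal family, degeneration of a smooth hypersurface to a simple normal crossing hyperplane arrangement with the resolved model of Lemma \ref{lemma:model} (built from the toric Lemma \ref{lem:desing}), then Proposition \ref{prop:snc}, Lemma \ref{lem:hyperp}, and Larsen--Lunts (Theorem \ref{thm:LL}) to force $[X] \equiv 1 \mod{\L}$ and reach a contradiction. The only difference is presentational: you unpack and re-sketch the construction of Lemma \ref{lemma:model} (the pencil, the local form $t y = z_1 \cdots z_m$, and the fact that the blow-ups leave the central fiber's class unchanged modulo $\L$) rather than simply citing it, which is exactly how the paper's proof of that lemma proceeds.
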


\begin{remark}
By the main result of \cite{NS}, existence of a single
stably irrational smooth projective 
hypersurface of dimension $n$ and degree $d$
is equivalent to very general such hypersurfaces
being stably irrational.
\end{remark}

Before we prove the Theorem we need the following Lemma,
which provides a convenient degeneration of smooth 
hypersurfaces.

\begin{lemma}\label{lemma:model}
For every $n \ge 2$, $1 \le d \le n+1$ 
there exists a smooth connected curve $C$,
with a point  $0 \in C$, and a
flat proper morphism
\[
\pi: \XX \to C
\]
with smooth $\XX$
such that 
\begin{enumerate}
    \item All fibers $\pi^{-1}(t)$, for 
$t \ne 0$ are smooth projective hypersurfaces
of dimension $n$ and degree $d$,
\item The fiber $\pi^{-1}(0)$ is reduced simple
normal crossing and satisfies
\[
[\pi^{-1}(0)] \equiv 1 \mod{\L}.
\]
\end{enumerate}
\end{lemma}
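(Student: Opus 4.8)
The plan is to construct the degeneration explicitly by taking a pencil (or a suitable one-parameter family) of hypersurfaces whose special member is a union of $d$ hyperplanes in general position, and then to resolve the singularities of the total space so that the central fiber becomes reduced simple normal crossing. Concretely, choose homogeneous coordinates $x_0, \dots, x_{n+1}$ on $\P^{n+1}$, pick $d$ distinct hyperplanes $H_1, \dots, H_d$ forming a simple normal crossing arrangement (possible since $d \le n+1 \le n+2$, so no more than $n+2$ hyperplanes are needed and they can be taken in general position), and let $G = \prod_{i=1}^d \ell_i$ be the product of the corresponding linear forms. For a general homogeneous degree $d$ form $F$, consider the pencil spanned by $F$ and $G$, giving a hypersurface $\YY \subset \P^{n+1} \times \A^1$ defined by $t F + G = 0$ with projection to the $t$-line $\A^1$. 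The general fiber is a smooth hypersurface of degree $d$ (by Bertini, for very general $F$ and away from $t=0$), and the fiber over $0$ is exactly $H_1 \cup \dots \cup H_d$.

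The next step is to analyze and resolve the singularities of the total space $\YY$. Away from the codimension-$\ge 2$ strata of the arrangement, the total space is already smooth; the problematic locus is along the intersections $H_{i_1} \cap \dots \cap H_{i_k}$ where the central fiber fails to be normal crossing as a divisor in the total space. First I would check that the singularities of $\YY$ are precisely of the toric type analyzed in Lemma \ref{lem:desing}: near a point lying on exactly $k$ of the hyperplanes, after choosing local coordinates so that the $\ell_i$ become coordinate functions $z_1, \dots, z_k$, the equation $tF + G = 0$ takes the local form $t \cdot (\text{unit}) = z_1 \cdots z_k$, which up to the unit and extra smooth factors is exactly the hypersurface $t y = z_1 \cdots z_k$ of Lemma \ref{lem:desing}. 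Thus along each $k$-fold intersection stratum the total space has the toric singularity whose resolution is described there, and the resolution replaces the fiber over $0$ by a reduced simple normal crossing divisor.

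Having matched the local models, I would invoke Lemma \ref{lem:desing}(2)--(3) to resolve $\YY$, obtaining $\tau: \wt{\YY} \to \YY$ with $\wt{\YY}$ smooth and $\wt{\pi} = \pi \circ \tau$ having reduced simple normal crossing central fiber; by construction the general fiber is unchanged, so part (1) of the Lemma holds. The delicate point is that these resolutions performed along the various intersection strata must be carried out compatibly, in order of increasing codimension (deepest strata first, or equivalently iterating the blow-ups of proper transforms of the Weil divisors $V(t, z_j)$ as in part (3) of Lemma \ref{lem:desing}), so that the toric pictures glue into a global resolution and the central fiber emerges globally reduced and simple normal crossing; I expect this bookkeeping to be the main obstacle. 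Finally I would base change to a smooth curve $C$ mapping to $\A^1$ through $0$ (or simply take $C = \A^1$ with the point $0$) to land in the setting of the statement, though if one only has a resolution over a neighborhood this base change is harmless.

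For part (2), the central fiber of the resolved family is birational — in fact has the same class modulo $\L$ — to the original arrangement $H_1 \cup \dots \cup H_d$ together with the exceptional divisors introduced by $\tau$. The cleanest route is to apply Proposition \ref{prop:snc} or a direct computation: the resolution $\tau$ is an isomorphism over the smooth locus and replaces the toric singular strata by the $\L$-rational fibers computed in Lemma \ref{lem:desing}, so that $[\wt{\pi}^{-1}(0)] \equiv [\pi^{-1}(0)] = [H_1 \cup \dots \cup H_d] \pmod{\L}$. Since $d \le n+1$, Lemma \ref{lem:hyperp} gives $[H_1 \cup \dots \cup H_d] \equiv 1 \pmod{\L}$, which yields the desired congruence $[\wt{\pi}^{-1}(0)] \equiv 1 \pmod{\L}$ and completes the construction.
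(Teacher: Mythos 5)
Your overall strategy---degenerate to a simple normal crossing arrangement via $tF+G=0$, resolve the total space by the toric Lemma~\ref{lem:desing}, and check that the class of the central fiber modulo $\L$ is unchanged so that Lemma~\ref{lem:hyperp} applies---is exactly the paper's. However, your local analysis, which is the heart of the argument, is wrong as stated. At a point $P$ of the central fiber lying on exactly $k$ hyperplanes there are two genuinely different cases, governed by whether $F$ vanishes at $P$. If $F(P)\neq 0$, the local form is indeed $t\cdot(\mathrm{unit})=z_1\cdots z_k$; but then $\YY$ is \emph{smooth} at $P$ (the $t$-derivative of $tF+G$ is the unit $F$, or equivalently absorb the unit into $t$ and solve for $t$), so there is nothing to resolve, and one only has to note that the projection to $\P^{n+1}$ identifies $\YY$ locally with an open set in which the fiber is the arrangement, hence SNC. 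If $F(P)=0$---which does happen, since $V(F)$ meets the codimension $\geq 2$ strata, and these are precisely the singular points of $\YY$---the factor multiplying $t$ is \emph{not} a unit, and one needs transversality of $V(F)$ with the stratum through $P$ to take $F$ as a new coordinate $y$ vanishing at $P$, so that after a formal change of coordinates the equation becomes $ty=z_1\cdots z_k$ as in Lemma~\ref{lem:desing}. Your phrase ``up to the unit and extra smooth factors is exactly the hypersurface $ty=z_1\cdots z_k$'' conflates the two cases: $tu=z_1\cdots z_k$ with $u$ a unit is smooth, while $ty=z_1\cdots z_k$ with $y$ a coordinate is singular for $k\geq 2$, and no change of variables trades the unit for the coordinate. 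This is exactly why the paper imposes that the general member $V(F_1)$ meet all strata of the arrangement transversally---a hypothesis your argument never uses, because your local model skips the case where it is needed.

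Two further points. First, the gluing issue you flag as ``the main obstacle'' is dealt with in the paper not by ordering local resolutions by depth of strata, but simply by blowing up the \emph{global} Weil divisors given by the components $E_i$ of the central fiber, in any order; Lemma~\ref{lem:desing}(3) identifies this, locally at each singular point, with the toric resolution, so no separate compatibility argument is required. Second, your suggestion to use Proposition~\ref{prop:snc} to prove part (2) is circular: that proposition requires knowing that all fibers over $t\neq 0$ are stably birational to a fixed smooth projective variety, which is precisely what is unknown here (and what Theorem~\ref{thm:hypers} is designed to contradict). Only your alternative ``direct computation'' is viable, and the paper's form of it is that each chart of each blow up is again a hypersurface in $\A^{n+2}$ and the blow ups glue in Zariski locally trivial $\P^1$-fibrations, so the class of the central fiber in $K_0(Var/k)/(\L)$ is unchanged at every step, reducing part (2) to Lemma~\ref{lem:hyperp}.
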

\begin{proof}
We consider two sections $F_0, F_1 \in H^0(\P^{n+1}, \OO(d))$. We take $F_0$ to be a product of $d$
linearly independent linear forms,
and $F_1$ to be a general section. In particular
the hypersurface $V(F_1) \subset \P^{n+1}$ is smooth, 
and it intersects all the strata of the hyperplane
arrangement $V(F_0)$ transversally.

We set $\XX$ to denote the zero locus of $F_0 + tF_1$
in $\P^{n+1} \times \A^1$. After restricting to an open
subset $C \subset \A^1$ we may assume that the fibers
of $\pi: \XX' \to C$ for $t \ne 0$ are smooth.
The fiber $\pi^{-1}(0)$ is the hyperplane arrangement
$V(F_0)$. Since $d \le n+1$, Lemma \ref{lem:hyperp}
implies that the fiber $\pi^{-1}(0)$ satisfies
\[\;
[\pi^{-1}(0)] \equiv 1 \mod{\L}.
\]

Thus the morphism $\pi: \XX \to C$ satisfies
all the requirements of the Lemma except
for smoothness of the total space $\XX$.

We provide an explicit desingularization of $\XX$.
Let $E_1, \dots, E_d$ be the components of $\pi^{-1}(0)$.
We claim that blowing up the Weil divisors 
$E_1, \dots, E_n$ (in any order) produces a model satisfying
all the required properties.

Locally at every point $P \in \XX$ in the central fiber
$t = 0$, the model $\XX$ is given by equations of the
form
\begin{equation}\label{eq:local}
t \cdot f + l_1 \cdots l_d = 0,
\end{equation}
where $l_i$ are linear polynomials and $f$ is a
polynomial of degree $d$, in $n+1$ variables. 

We change coordinates so that $P = 0$, and by our transversality
assumptions the equation can be written as
\[
t \cdot (x_{n+1} + \text{terms of deg. $\ge 2$}) + x_1 \cdots x_k \cdot 
g(x_1, \dots, x_{n+1}) = 0,
\]
where $g(0) \ne 0$ and $k \le n$.
Taking formal completion of $\XX$ at $P$ we can change the coordinates
again to rewrite the defining local equation as
\[
t \cdot x_{n+1} = x_1 \cdots x_k.
\]
According to Lemma \ref{lem:desing} such singularities
are resolved by a sequence of blows up of Weil divisors $V(t,x_i)$ (which are precisely the components of
the central fiber containing the point $P$)
and this new model is semistable over $0 \in \A^1$ (and the rest of
the fibers are unchanged, so they are smooth hypersurfaces).

Since each open chart of the blow up is
a hypersurface in $\A^{n+2}$, the resulting blow ups
only glue in
Zariski locally trivial $\P^1$-fibrations. In particular, the class
of the central fiber in $K_0(Var/k)/(\L)$ 
does not change at each blow up.
\end{proof}

\begin{proof}[Proof of Theorem \ref{thm:hypers}]
Let $U \subset \P(H^0(\P^{n+1}, \OO(d)))$ be the open
subset parametrizing smooth hypersurfaces.
By Theorem \ref{thm:variation}, if stable
birational types of hypersurfaces of dimension $n$
and degree $d$, does NOT vary, it has be constant,
that is all such smooth hypersurfaces are
stably birational to a smooth projective 
variety $X$. 

We now consider the family $\pi: \XX \to C$ given by Lemma
\ref{lemma:model}. From what we explained above,
all fibers $\pi^{-1}(t)$, for $t \ne 0$ have to be
stably birational to $X$. By Proposition \ref{prop:snc},
the special fiber has to satisfy 
\[\;
1 \equiv [\pi^{-1}(0)] \equiv [X] \mod{\L}.
\]
This is a contradiction, since Larsen-Lunts Theorem \ref{thm:LL}
implies that $X$ is stably rational, contrary to our assumptions.
 \end{proof}

\begin{remark}\label{rem:log}
Using motivic volume expressed in terms of log smooth
models 
\cite[Appendix A]{NS} 
the result of Theorem \ref{thm:hypers} can
be obtained
without explicit resolution of singularities
of the model
by applying \cite[Theorem A.3.9]{NS}
to the appropriate log scheme.
However, the explicit resolution obtained in Lemma \ref{lemma:model}
can be useful for other purposes, such
as 
in the proof of the same
result in positive characteristic by
Schreieder
\cite[Theorem 5.1]{Schreieder-var}.
\end{remark}
 
\providecommand{\arxiv}[1]{{\tt arXiv:#1}}






\section*{Appendix: Stable birational equivalence and decomposition of the diagonal,
by Claire Voisin}

We prove in this appendix  that, if  a family of  projective varieties has a mildly singular member
with a nonzero unramified cohomology class with given coefficients, while the very general member $Y$ is smooth and
has no such class, the stable birational equivalence class of the fibers $Y_t$ is not constant.
In particular, quartic and sextic double covers of $\mathbb{P}^3$  do not have a constant stable birational type.
This result is inspired by the main theorem of Shinder in this paper. Note however that the assumptions
and range of applications in both statements are different. We will work  over any algebraically closed field $k$ of infinite transcendence degree over the prime field but  the main  application   (Theorem \ref{theoappli})  will assume    characteristic $0$. We refer to Schreieder recent note \cite{schreiedervar} for generalizations and
a similar statement in nonzero characteristic.

We start with the following decomposition of the diagonal result for stable birational equivalence.
\begin{prop}\label{propcharstabdecomp}  Let $X$, $Y$ be two smooth projective varieties of dimension $n$. Assume
$X$ and $Y$ are stably birational.
Then there exist codimension $n$ cycles
$$\Gamma\in{\rm CH}^n(X\times Y),\,\,\Gamma'\in{\rm CH}^n(Y\times X)$$
such that
\begin{eqnarray}
\label{eqbiratdecomp}
\Gamma'\circ \Gamma =\Delta_X+Z_X\,\,{\rm in}\,\,{\rm CH}^n(X\times X),\\
\nonumber
\Gamma\circ \Gamma' =\Delta_Y+Z_Y \,\,{\rm in}\,\,{\rm CH}^n(Y\times Y),
\end{eqnarray}
where $Z_X$ is supported on $D_X\times X$ for some proper closed algebraic subset $D_X\subset X$, and
 $Z_Y$ is supported on $D_Y\times Y$ for some proper closed algebraic subset $D_Y\subset Y$.
\end{prop}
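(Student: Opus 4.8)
The plan is to realise the required two-sided decomposition as the ``generic fibre shadow'' of the classical fact that zero-cycles form a \emph{stable} birational invariant; this is a decomposition-of-the-diagonal argument in the spirit of Bloch--Srinivas. First I would replace $X,Y$ by an honest birational model of the stable equivalence: choose $m$ and a birational map $\phi\colon V\dashrightarrow W$ between the smooth projective varieties $V=X\times\P^m$ and $W=Y\times\P^m$, both of dimension $N=n+m$. Let $\Gamma_0\in{\rm CH}_N(V\times W)$ be the closure of the graph of $\phi$ and $\Gamma_0'\in{\rm CH}_N(W\times V)$ that of $\phi^{-1}$. Fixing a point $o\in\P^m$, let $I_X\colon X\hookrightarrow V$, $x\mapsto(x,o)$, with projection $P_X\colon V\to X$, and define $I_Y,P_Y$ analogously. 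I then set
\[
\Gamma:=[\Gamma_{P_Y}]\circ\Gamma_0\circ[\Gamma_{I_X}],\qquad
\Gamma':=[\Gamma_{P_X}]\circ\Gamma_0'\circ[\Gamma_{I_Y}],
\]
using the graphs of the indicated morphisms. A direct dimension count gives $\Gamma\in{\rm CH}_n(X\times Y)={\rm CH}^n(X\times Y)$ and $\Gamma'\in{\rm CH}^n(Y\times X)$, and these correspondences induce the maps ${\rm CH}_0(X)\to{\rm CH}_0(Y)$ and ${\rm CH}_0(Y)\to{\rm CH}_0(X)$ expressing the stable birational invariance of $\mathrm{CH}_0$.

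The second ingredient is a support criterion: for $X$ smooth projective and any variety $M$, a class $\Theta\in{\rm CH}_n(X\times M)$ is supported on $D\times M$ for some proper closed $D\subsetneq X$ if and only if its restriction to the generic fibre of the first projection vanishes, i.e. $\Theta|_{\eta_X\times M}=0$ in ${\rm CH}_0(M_{k(X)})$. I would deduce this from the localization exact sequence ${\rm CH}_n(D\times M)\to{\rm CH}_n(X\times M)\to{\rm CH}_n(U\times M)\to 0$ together with continuity of Chow groups, $\varinjlim_U {\rm CH}_n(U\times M)={\rm CH}_0(M_{k(X)})$. Taking $M=X$, this reduces the first identity in \eqref{eqbiratdecomp} to the single verification
\[
(\Gamma'\circ\Gamma)|_{\eta_X\times X}=\Delta_X|_{\eta_X\times X}=[\delta_X],
\]
the class of the diagonal ($k(X)$-rational) point; one then sets $Z_X:=\Gamma'\circ\Gamma-\Delta_X$, automatically supported on $D_X\times X$.

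For the verification I would use that restriction to the generic fibre is compatible with composition of correspondences, so that $(\Gamma'\circ\Gamma)|_{\eta_X\times X}$ is computed by applying, over the field $k(X)$, the induced maps of $\Gamma$ and $\Gamma'$ to $[\delta_X]\in{\rm CH}_0(X_{k(X)})$. On zero-cycles over $k(X)$ this composite factors as $P_{X*}\circ\phi^{-1}_*\circ I_{Y*}\circ P_{Y*}\circ\phi_*\circ I_{X*}$. Here the two composites $I_{Y*}\circ P_{Y*}$ and $P_{X*}\circ I_{X*}$ are the identity on $\mathrm{CH}_0$ because the $\P^m$-fibres are rationally connected (two points of a common fibre are rationally equivalent), while $\phi_*$ and $\phi^{-1}_*$ are mutually inverse by birational invariance of ${\rm CH}_0$ for smooth projective varieties over $k(X)$. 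Hence the composite is $P_{X*}\circ I_{X*}=\mathrm{id}$ and fixes $[\delta_X]$, giving the first identity; the second follows by the symmetric computation with the roles of $X$ and $Y$ exchanged. The main obstacle is exactly the clean handling of the generic fibre: checking that composition of correspondences commutes with restriction to $\eta_X$, and recognising that the $\P^m$-coordinates which $\phi$ scrambles are harmless precisely because of rational equivalence within a fibre. This is the step where stable, rather than plain, birational equivalence is genuinely used.
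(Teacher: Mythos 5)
Your proposal is correct, and in fact your correspondences coincide with the paper's: composing the graph closure $\Gamma_0$ with the graphs of $I_X$ and $P_Y$ is exactly the paper's $\Gamma=p_{XY*}({\Gamma_{\phi}}|_{X\times O\times Y\times \P^r})$. What genuinely differs is the verification that $\Gamma'\circ\Gamma-\Delta_X$ is supported over a divisor. The paper works entirely at the level of cycles: it expands ${\rm CH}(X\times\P^r\times Y\times\P^r)=\oplus_{i,j}h_1^ih_2^j\,{\rm CH}(X\times Y)$, writes the composition $\Gamma'_\phi\circ\Gamma_\phi$ in this basis, extracts the coefficient of $h_1^0h_2^r$, and disposes of the cross terms $\Gamma'_{\phi,r-j,r}\circ\Gamma_{\phi,0,j}$ with $j<r$ by a dimension count ($\dim\Gamma_{\phi,0,j}<n$, so these terms cannot dominate $X$ via the first projection). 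You instead invoke the dictionary between divisorially supported error terms and the generic fibre: $\Theta\in{\rm CH}_n(X\times M)$ is supported on $D\times M$ for some proper closed $D\subsetneq X$ iff $\Theta|_{\eta_X\times M}=0$ in ${\rm CH}_0(M_{k(X)})$ (localization plus $\varinjlim_U{\rm CH}_n(U\times M)={\rm CH}_0(M_{k(X)})$), and then carry out a purely functorial computation in ${\rm CH}_0$ over $k(X)$, using triviality of ${\rm CH}_0$ along the $\P^m$-factor and the classical birational invariance of ${\rm CH}_0$ for smooth projective varieties to see that the composite fixes $[\delta_X]$. Both arguments are complete modulo standard facts; the only imprecision on your side is that $I_{Y*}\circ P_{Y*}=\mathrm{id}$ should, for closed points of higher residue degree, be justified by ${\rm CH}_0(\P^m_{\kappa})\cong\Z$ via the degree map rather than by literally joining two points --- a harmless fix. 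The trade-off: the paper's explicit cycle computation is self-contained and keeps visible track of where every error term is supported, which is the style needed again in Theorem \ref{theodecompstab}, where the term $Z'$ must be localized on $\widetilde{\mathcal{Y}}_0\times E$; your Bloch--Srinivas-style reduction is shorter and more conceptual, makes transparent exactly where stable (rather than plain) birational equivalence enters, and yields immediately the one-sided statement without the equal-dimension hypothesis --- precisely the simplification recorded in the paper's Remark following the Proposition.
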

\begin{proof}  When $ X$ and $Y$ are actually birational, this statement is proved in \cite{CTV}. In this case, we simply take for  $\Gamma$
the graph of a birational map
$\phi:X\dashrightarrow Y$ and for $\Gamma'$ the graph of $\phi^{-1}$. The equality
$\Gamma'\circ \Gamma =\Delta_X$ (resp. $\Gamma\circ \Gamma' =\Delta_Y$) is  in this case satisfied at the level of cycles  on
$U\times X$, resp. $V\times Y$, where $U\cong V$ is a Zariski open set of $X$ on which $\phi$ is an isomorphism onto its image $V\subset Y$.
Assume now that
$$\phi: X\times \mathbb{P}^r\dashrightarrow Y\times \mathbb{P}^r$$
is a birational map for some $r$. Then by the previous step,
there exist
$$\Gamma_\phi \in {\rm CH}^{n+r}(X\times  \mathbb{P}^r\times Y\times  \mathbb{P}^r),\,\,
\Gamma'_{\phi}\in {\rm CH}^{n+r}(Y\times  \mathbb{P}^r\times X\times  \mathbb{P}^r)$$
such that formulas  (\ref{eqbiratdecomp})  hold  for some proper closed algebraic subsets
$D\subset X\times   \mathbb{P}^r$, resp. $D'\subset  Y\times   \mathbb{P}^r$.
For any point $O\in  \mathbb{P}^r$, define
\begin{eqnarray} \label{eqdefGamma}\Gamma:=p_{XY*}({\Gamma_{\phi}}_{\mid X\times O \times Y\times  \mathbb{P}^r})
\\
\nonumber
 \Gamma':=p_{YX*}({\Gamma'_{\phi}}_{\mid Y\times O \times X\times  \mathbb{P}^r}),
\end{eqnarray}
where $p_{XY}$ is the projection from $X\times O  \times Y\times  \mathbb{P}^r$ to $X\times Y$, and $p_{YX}$
 is the projection from $Y\times O  \times X\times  \mathbb{P}^r$ to $Y\times X$.
We have to show that (\ref{eqbiratdecomp}) holds.

Let us decompose ${\rm CH}(X\times \mathbb{P}^r\times Y\times \mathbb{P}^r)$ as polynomials in $h_1,\,h_2$ with coefficients in
${\rm CH}( X\times Y)$, where $h_1={\rm pr}_2^*c_1(\mathcal{O}_{\mathbb{P}^r}(1))$, $h_2={\rm pr}_4^*c_1(\mathcal{O}_{\mathbb{P}^r}(1))$
$${\rm CH}(X\times \mathbb{P}^r\times Y\times \mathbb{P}^r)=\oplus_{0\leq i\leq r,0\leq j\leq r}h_1^ih_2^j {\rm CH}(X\times Y),$$
which gives in particular
\begin{eqnarray}
\label{eqdecmpgamma}
\Gamma_\phi=\sum_{i,j} h_1^ih_2^j\Gamma_{\phi,i,j} ,\,\,\Gamma'_{\phi}=\sum_{i,j} h_1^ih_2^j\Gamma'_{\phi,i,j},
\end{eqnarray}
with $\Gamma_{\phi,i,j}\in {\rm CH}(X\times Y),\,\Gamma'_{\phi,i,j}\in  {\rm CH}(Y\times X)$.
We obviously have  $\Gamma=\Gamma_{\phi,0,r},\,\,\Gamma'=\Gamma'_{\phi,0,r}$.
With the notation (\ref{eqdecmpgamma}), we have
 \begin{eqnarray}
\label{eqcomposgamma}
\Gamma'_{\phi}\circ \Gamma_\phi=\sum_{i,j,j'} h_1^ih_2^{j'} \Gamma'_{\phi,r-j,j'}\circ \Gamma_{\phi,i,j}\,\,{\rm in}\,\,{\rm CH}(X\times \mathbb{P}^r\times X\times \mathbb{P}^r).
\end{eqnarray}
while
$\Delta_{X\times \mathbb{P}^r}=\sum_{i+j=r}h_1^ih_2^j\Delta_X$ in ${\rm CH}(X\times \mathbb{P}^r\times X\times \mathbb{P}^r)$.
The fact that $\Gamma'_{\phi}\circ \Gamma_\phi-\Delta_{X\times \mathbb{P}^r}$ is rationally equivalent to a cycle supported via the first projection over a proper closed algebraic subset of $X\times \mathbb{P}^r$ then implies (by taking $i=0$, $j'=r$ in (\ref{eqcomposgamma}))
that the cycle
  \begin{eqnarray}
\label{eqcyclesuma}\sum_{j}\Gamma'_{\phi,r-j,r}\circ \Gamma_{\phi,0,j}-\Delta_X
\end{eqnarray}
 is supported via the first projection over  a proper closed algebraic subset of $X$.
We observe now that ${\rm dim}\,\Gamma=n+r$, $n={\rm dim}\,X$, so that $\Gamma_{\phi,0,j}$ for $j<r$ has dimension $<n$, hence does not dominate
$X$ via the first projection. It follows that $\sum_{j<r}\Gamma'_{\phi,r-j,r}\circ \Gamma_{\phi,0,j}$ does not dominate
$X$ via the first projection, so that the remaining term in (\ref{eqcyclesuma}) with $j=r$, namely
$$ \Gamma'_{\phi,0,r}\circ \Gamma_{\phi,0,r}-\Delta_X$$
is rationally equivalent to a cycle  supported over a proper closed algebraic subset of $X$ via the first projection. Exchanging $X$ and $Y$ concludes the proof.
\end{proof}
\begin{rema}{\rm In the sequel, we will use   a weaker version of Proposition \ref{propcharstabdecomp}, stating only the first decomposition in (\ref{eqbiratdecomp}). There is in this case no need to assume that $X$ and $Y$ are of the same dimension. Furthermore, as noticed by Shinder,  the proof can be then  made simpler  by observing that the stated existence property for
$\Gamma,\,\Gamma'$ holds for pairs of  birational varieties, and also for the pair $(X,\,X\times \mathbb{P}^r)$.}
\end{rema}
We now prove the following version of the specialization theorem for decomposition of the diagonal fist proved in
\cite{voisin}, and later  improved in \cite{CTPI}. We will say that a variety $Z$ has mild singularities if
there exists a desingularization morphism  $\tau : \widetilde{Z}\rightarrow Z$ which is ${\rm CH}_0$-universally trivial in the sense of \cite{CTPI}. This means that $\tau_*$ is an isomorphism on ${\rm CH}_0$ over  any field $K$ containing $k$.
The easiest way to make this condition satisfied is to ask that $\tau$ has  the following  property : for each (irreducible) subvariety $M\subset Z$, the induced morphism $  \widetilde{Z}_M\rightarrow M$ has generic  fiber  smooth rational  over
$k(M)$.
For example, ordinary quadratic  singularities in dimension $\geq 2$ are mild. We refer to \cite{ctpiizvestya} for a more general geometric interpretation of the mildness condition.
\begin{theo} \label{theodecompstab} (i)  Let $\mathcal{Y}\rightarrow B$ be a projective flat morphism of relative dimension $n$, where $B$ is smooth.
Assume the general fiber $\mathcal{Y}_b$ is smooth and  stably birational to a fixed smooth projective variety $Y$ of dimension $n$. Then, for any desingularization  $ \widetilde{\mathcal{Y}_0}$
of $\mathcal{Y}_0$,  there exist
codimension $n$ cycles $\Gamma\in {\rm CH}^n(\widetilde{\mathcal{Y}}_0\times Y)$, $\Gamma'\in {\rm CH}^n(Y\times \widetilde{\mathcal{Y}_0})$ such that
\begin{eqnarray}
\label{eqformcomp}\Gamma'\circ \Gamma=\Delta_{\widetilde{\mathcal{Y}}_0}+Z+Z'\,\,{\rm in}\,\,{\rm CH}^n( \widetilde{\mathcal{Y}_0}\times  \widetilde{\mathcal{Y}_0}),
\end{eqnarray}
where $Z$ is supported on $D\times  \widetilde{\mathcal{Y}_0}$ for some proper closed algebraic subset $D$ of
$\widetilde{\mathcal{Y}_0}$ and $Z'$ is supported over $ \widetilde{\mathcal{Y}_0}\times E$, where $E$ is the exceptional locus of $\tau$.

(ii)    If  the special fiber
$\mathcal{Y}_0$ has mild singularities, one can achieve for an adequate choice of desingularization
$\widetilde{\mathcal{Y}}_0$ that $Z'=0$ in (\ref{eqformcomp}).
\end{theo}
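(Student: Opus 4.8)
The plan is to run the specialization argument for decompositions of the diagonal of \cite{voisin, CTPI} in the present relative, stably birational setting, feeding in Proposition \ref{propcharstabdecomp} over the generic point of the base. First I would produce the decomposition generically. Let $\eta$ be the generic point of $B$ and $K=k(\eta)$. Since the general fibre is stably birational to $Y$, the generic fibre $\mathcal{Y}_\eta$ is stably birational to $Y_K:=Y\times_k K$ (possibly after a finite extension of $K$, which changes nothing below), so Proposition \ref{propcharstabdecomp} supplies cycles $\Gamma_K\in{\rm CH}^n(\mathcal{Y}_\eta\times_K Y_K)$ and $\Gamma'_K\in{\rm CH}^n(Y_K\times_K\mathcal{Y}_\eta)$ with
\[
\Gamma'_K\circ\Gamma_K=\Delta_{\mathcal{Y}_\eta}+Z_K\quad\text{in }{\rm CH}^n(\mathcal{Y}_\eta\times_K\mathcal{Y}_\eta),
\]
where $Z_K$ is supported on $D_K\times\mathcal{Y}_\eta$. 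To be able to specialize I would restrict the family to a general smooth curve through $0$ and localise, reducing to the case $B=\operatorname{Spec}R$ for a discrete valuation ring $R$ with fraction field $K$ and residue field $k$, generic fibre smooth and stably birational to $Y_K$, and $0$ the closed point.

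Next I would specialize the cycles. Spreading $\Gamma_K,\Gamma'_K$ out to their Zariski closures in $\mathcal{Y}_R\times_k Y$ and $Y\times_k\mathcal{Y}_R$ and restricting to the special fibre gives, via the specialization homomorphism $\sigma$ for Chow groups, classes $\Gamma_0:=\sigma(\Gamma_K)\in{\rm CH}^n(\mathcal{Y}_0\times Y)$ and $\Gamma'_0:=\sigma(\Gamma'_K)\in{\rm CH}^n(Y\times\mathcal{Y}_0)$. The key observation is that the composition of $\Gamma_0$ and $\Gamma'_0$ is taken along the \emph{smooth} factor $Y$, so it is well defined (as a refined Gysin operation along the diagonal of $Y$) even though $\mathcal{Y}_0$ is singular, and it commutes with $\sigma$. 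Specializing the displayed identity term by term therefore yields
\[
\Gamma'_0\circ\Gamma_0=\Delta_{\mathcal{Y}_0}+Z_0\quad\text{in }{\rm CH}^n(\mathcal{Y}_0\times\mathcal{Y}_0),
\]
with $Z_0$ supported on $D_0\times\mathcal{Y}_0$ for $D_0$ the specialization of $D_K$, using $\sigma(\Delta_{\mathcal{Y}_\eta})=\Delta_{\mathcal{Y}_0}$.

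Then I would transfer everything to the resolution. Put $\Gamma:=(\tau\times{\rm id}_Y)^*\Gamma_0$ and $\Gamma':=({\rm id}_Y\times\tau)^*\Gamma'_0$; these pullbacks exist because their targets $\widetilde{\mathcal{Y}_0}\times Y$ and $Y\times\widetilde{\mathcal{Y}_0}$ are smooth. Applying $(\tau\times\tau)^*$ to the identity above and using that external pullback commutes with composition along $Y$, I obtain
\[
\Gamma'\circ\Gamma=(\tau\times\tau)^*\Delta_{\mathcal{Y}_0}+(\tau\times\tau)^*Z_0.
\]
Here $(\tau\times\tau)^*\Delta_{\mathcal{Y}_0}$ is supported on $\widetilde{\mathcal{Y}_0}\times_{\mathcal{Y}_0}\widetilde{\mathcal{Y}_0}$ and, $\tau$ being birational, equals $\Delta_{\widetilde{\mathcal{Y}_0}}$ plus a cycle $Z'$ whose extra components lie over the non-isomorphism locus of $\tau$, hence on $\widetilde{\mathcal{Y}_0}\times E$; while $(\tau\times\tau)^*Z_0=:Z$ is supported on $\tau^{-1}(D_0)\times\widetilde{\mathcal{Y}_0}=:D\times\widetilde{\mathcal{Y}_0}$. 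This gives (\ref{eqformcomp}) and proves (i). I expect the main obstacle to be exactly this bookkeeping: verifying that $\sigma$ commutes with composition along $Y$ and with the external pullbacks $(\tau\times\cdot)^*$, and that the two error cycles have supports landing precisely on $D\times\widetilde{\mathcal{Y}_0}$ and on $\widetilde{\mathcal{Y}_0}\times E$ — all of which must be carried out on smooth ambient spaces, the singular $\mathcal{Y}_0$ only ever appearing as the target of a pullback.

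For (ii) I would use the mildness hypothesis to remove $Z'$. Choosing $\tau$ to be ${\rm CH}_0$-universally trivial, I would restrict the decomposition of (i) to the generic point of the first factor of $\widetilde{\mathcal{Y}_0}\times\widetilde{\mathcal{Y}_0}$, obtaining an identity of zero-cycles over $k(\widetilde{\mathcal{Y}_0})$ modulo contributions of $D$ and of $E$; the universal triviality of $\tau_*$ on ${\rm CH}_0$ then forces the $E$-contribution to be congruent to a cycle coming from $D$, so that $Z'$ is absorbed into $Z$ and may be taken to be $0$. This is precisely the mechanism of \cite{CTPI}, and is where the mildness of the singularities of $\mathcal{Y}_0$ is indispensable.
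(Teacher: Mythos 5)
Your overall route parallels the paper's proof: obtain a decomposition of the diagonal on the general fibre from Proposition \ref{propcharstabdecomp}, specialize it via Fulton's specialization of rational equivalence after reducing to a curve (equivalently a DVR), transfer the resulting identity to a resolution of $\mathcal{Y}_0$, and for (ii) absorb $Z'$ into $Z$ using the universal ${\rm CH}_0$-triviality of $\tau$, as in \cite{CTPI}. The specialization step itself is sound: composition of correspondences through the smooth projective factor $Y$ is defined by a refined Gysin map along the diagonal of $Y$ and commutes with specialization, and your mechanism for (ii) is the correct one. One point you assert without proof, and should not, is the passage from ``the general closed fibre is stably birational to $Y$'' to ``the generic fibre is stably birational to $Y_K$ after a finite extension'': this requires the countability argument (Chow/Hilbert schemes together with the hypothesis on the ground field) that the paper invokes in order to spread the cycles $\Gamma_t,\Gamma'_t,Z_t$ into families after a base change $B'\to B$; it is not automatic.

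The genuine gap is the transfer to the resolution. You set $\Gamma:=(\tau\times{\rm id}_Y)^*\Gamma_0$ and apply $(\tau\times\tau)^*$ to the identity in ${\rm CH}^n(\mathcal{Y}_0\times\mathcal{Y}_0)$, justifying these operations ``because their targets $\widetilde{\mathcal{Y}_0}\times Y$ and $Y\times\widetilde{\mathcal{Y}_0}$ are smooth.'' This inverts the actual criterion: for a morphism $f:X\to Z$, a pullback $f^*:{\rm CH}(Z)\to{\rm CH}(X)$ exists when $f$ is flat or when the \emph{target} $Z$ of $f$ is smooth (via the graph construction); smoothness of the source gives nothing. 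Here the targets $\mathcal{Y}_0\times Y$ and $\mathcal{Y}_0\times\mathcal{Y}_0$ of $\tau\times{\rm id}_Y$ and $\tau\times\tau$ are singular, and these morphisms are proper birational, neither flat nor local complete intersection morphisms, so the pullbacks you use are simply not defined; nor can one substitute proper transforms, since proper transform does not respect rational equivalence. This is exactly why the paper never pulls back from the singular special fibre: it restricts the specialized identity to $\mathcal{Y}_{0,reg}\times\mathcal{Y}_0$ (restriction to an open subset is flat pullback, hence legitimate), identifies $\mathcal{Y}_{0,reg}$ with $\widetilde{\mathcal{Y}}_0\setminus E$, lifts the cycles arbitrarily to $\widetilde{\mathcal{Y}}_0$ using the surjectivity in the localization exact sequence, and concludes that the defect of the lifted identity is supported on $E\times\widetilde{\mathcal{Y}}_0\,\cup\,\widetilde{\mathcal{Y}}_0\times E$; this is precisely the origin of the two error terms $Z$ and $Z'$ in (\ref{eqformcomp}). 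Your part (i) must be rewritten along these lines, and your part (ii), which uses the decomposition produced in (i) and in particular the fact that $Z'$ dies after pushforward to $U\times\mathcal{Y}_0$ ``by construction,'' inherits the same gap until (i) is repaired.
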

\begin{proof}  (i)  We can assume $B$ is a smooth curve.
 By assumption and  using Proposition \ref{propcharstabdecomp}, there exist  for a general point $t\in B$ a
 divisor $D_t\subset \mathcal{Y}_t$ and codimension $n$ cycles
 $\Gamma_t\in {\rm CH}^n(\mathcal{Y}_t\times Y),\,\,\Gamma'_t\in {\rm CH}^n(Y\times \mathcal{Y}_t )$,  such that
 \begin{eqnarray}
 \label{eqcompogen} \Gamma'_t\circ \Gamma_t =\Delta_{\mathcal{Y}_t}+Z_t\,\,{\rm in}\,\,{\rm CH}^n(\mathcal{Y}_t\times \mathcal{Y}_t),
 \end{eqnarray}
 where $Z_t$ is supported on $D_t\times \mathcal{Y}_t$.
By a countability argument for the Chow varieties parameterizing cycles in fibers, $D_t$ and  the cycles
$Z_t$, $\Gamma_t,\,\Gamma'_t$ can be constructed in families after a base change $B'\rightarrow B$.  We will denote
$\mathcal{Y}':=\mathcal{Y}\times_BB'$.
 This provides us with varieties and cycles
$$\mathcal{D}\subset \mathcal{Y},\,\,\mathcal{Z}\in{\rm CH}( \mathcal{D}\times_{B'}\mathcal{Y}'),\,\,\Gamma\in {\rm CH}( \mathcal{Y}'\times Y),\,\
\Gamma'\in {\rm CH}(Y\times \mathcal{Y}' )
$$
whose fiber at the general point $t\in B'$ satisfies (\ref{eqcompogen})
(see \cite{voisin} for the more details).
Restricting to the regular locus of the morphism $\mathcal{Y}'\rightarrow B'$, the composition in  (\ref{eqcompogen}) still  makes sense as a relative
composition because
 for $\Gamma\in {\rm CH}^n(U\times Y)$, $\Gamma'\in {\rm CH}^n(Y\times U')$, the composition
$\Gamma'\circ \Gamma$ is well-defined whenever $U$ and $Y$ are smooth, and $Y$ is projective. Furthermore, by specialization of rational equivalence,
(\ref{eqcompogen}) holds  in ${\rm CH}^n(\mathcal{Y}_{0,reg}\times \mathcal{Y}_{0})$ for any $0\in B$. Here, as we assumed $B$
(hence $B'$) is a curve, the divisor $\mathcal{D}$ can be assumed not to contain any component of  the fiber $\mathcal{Y}_0$, hence to
restrict to a proper divisor $D_0\subset \mathcal{Y}_0$.
Identifying $\mathcal{Y}_{0,reg}$ with
$\widetilde{\mathcal{Y}}_{0}\setminus E$, we get as well cycles
$$\widetilde{Z}_0,\,\,\widetilde{\Gamma}_0\in {\rm CH}^n(\widetilde{\mathcal{Y}}_{0}\times Y),\,\widetilde{\Gamma}'_0\in {\rm CH}^n(Y\times \widetilde{\mathcal{Y}}_{0})$$
with $\widetilde{Z}_0$ supported on $\widetilde{D}_0$
such that the equality
\begin{eqnarray}
\label{eqformcomppresque}\widetilde{\Gamma}'_0\circ\widetilde{ \Gamma}_0=\Delta_{\widetilde{\mathcal{Y}}_0}+\widetilde{Z}_0
\end{eqnarray}
holds in ${\rm CH}^n( (\widetilde{\mathcal{Y}_0}\setminus E)\times ( \widetilde{\mathcal{Y}_0}\setminus E))$.
It follows from the localization exact sequence that the cycle
$\widetilde{\Gamma}'_0\circ\widetilde{ \Gamma}_0-\Delta_{\widetilde{\mathcal{Y}}_0}-\widetilde{Z}_0\in {\rm CH}^n( \widetilde{\mathcal{Y}}_{0}\times \widetilde{\mathcal{Y}}_{0})$ is rationally equivalent to a cycle  supported on
$E\times  \widetilde{\mathcal{Y}}_{0} \cup  \widetilde{\mathcal{Y}}_{0}\times E$. This last cycle is the sum of a cycle
$Z_1$ supported on  $E\times  \widetilde{\mathcal{Y}}_{0}$ and a cycle $Z_2$ supported on $ \widetilde{\mathcal{Y}}_{0}\times E$.
We thus proved (\ref{eqformcomp}) with $\Gamma=\widetilde{ \Gamma}_0,\,\Gamma'=\widetilde{\Gamma}'_0$,
$Z=\widetilde{Z}_0+Z_1$, $Z'=Z_2$.

 (ii)  As in  \cite{CTPI}, and using the fact that (\ref{eqcompogen}) holds  in ${\rm CH}^n(\mathcal{Y}_{0,reg}\times \mathcal{Y}_{0})$, we observe that the cycle $Z'\in{\rm CH}_n(\widetilde{\mathcal{Y}}_0\times E)$ vanishes by construction in ${\rm CH}_n(U\times \mathcal{Y}_0)$ for some dense Zariski open subset $U$ of $\widetilde{\mathcal{Y}}_0$.
 On the other hand,  we can  work by assumption  with  the resolution
 $\tau: \widetilde{\mathcal{Y}}_0\rightarrow \mathcal{Y}_0$ for which  the morphism $\tau$ is universally
 ${\rm CH}_0$-trivial. It follows that
 the cycle $Z'$, seen over the generic point
 of $\widetilde{\mathcal{Y}}_0 $ as a $
 0$-cycle of $\widetilde{\mathcal{Y}}_0 $ defined  on the field $k(\mathcal{Y}_0)$,
 vanishes in ${\rm CH}_0((\widetilde{\mathcal{Y}}_0)_{k(\mathcal{Y}_0}))$. Hence $Z'$ vanishes in
 ${\rm CH}^n(U\times\widetilde{\mathcal{Y}}_0 )$
 for some  dense Zariski open set  $U$ of  $\widetilde{\mathcal{Y}}_0$.
 By the localization exact sequence, it is thus supported on $D\times  \widetilde{\mathcal{Y}}_0 $, where $D= \widetilde{\mathcal{Y}}_0\setminus U$, and thus can be absorbed in the term  $Z$.
\end{proof}
\begin{coro} \label{corodeg} Under the  same assumptions as in Theorem \ref{theodecompstab} (ii), assume that $H^i_{nr}(Y,A)=0$ for some integer $i $ and
abelian group $A$. Then $H^i_{nr}( \widetilde{\mathcal{Y}_0}, A)=0$.
\end{coro}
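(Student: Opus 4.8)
The plan is to combine the action of codimension $n$ correspondences on unramified cohomology with the diagonal decomposition furnished by Theorem \ref{theodecompstab}(ii). Write $W:=\widetilde{\mathcal{Y}_0}$, and let $\Gamma\in {\rm CH}^n(W\times Y)$, $\Gamma'\in {\rm CH}^n(Y\times W)$ be the cycles produced there, so that
\[
\Gamma'\circ\Gamma=\Delta_W+Z\quad\text{in }{\rm CH}^n(W\times W),
\]
where $Z$ is supported on $D\times W$ for a proper closed subset $D\subsetneq W$ and --- this is the crucial gain of part (ii) over part (i) --- there is no remaining term supported over $W\times E$. I would use this identity to show that the identity endomorphism of $H^i_{nr}(W,A)$ factors through $H^i_{nr}(Y,A)$, which vanishes by hypothesis; we may assume $i\ge 1$, the case $i=0$ being immediate.

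First I would recall the formalism of the action of correspondences on unramified cohomology. For smooth projective varieties $V_1,V_2$ of pure dimension $n$, a codimension $n$ cycle $\gamma$ on $V_1\times V_2$ induces a homomorphism $\gamma^*\colon H^i_{nr}(V_2,A)\to H^i_{nr}(V_1,A)$, obtained by pulling back along the second projection, cup-multiplying by $[\gamma]$, and pushing forward along the first projection. This action is contravariantly compatible with composition, $(\Gamma'\circ\Gamma)^*=\Gamma^*\circ\Gamma'^*$, and the diagonal acts as the identity. The decisive input is the support property: if $\gamma$ is supported on $D\times V_2$ for a proper closed $D\subsetneq V_1$, then $\gamma^*=0$ on $H^i_{nr}(V_2,A)$ for $i\ge 1$, because the resulting class on $V_1$ is then supported on the codimension $\ge 1$ subset $D$, while $H^i_{nr}(V_1,A)$ injects into the Galois cohomology of the generic point $k(V_1)$, where such a class vanishes.

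Applying $(\,\cdot\,)^*$ to the displayed identity and using $\Delta_W^*=\mathrm{id}$ together with $Z^*=0$ (valid since $Z$ is supported on $D\times W$), I obtain
\[
\Gamma^*\circ\Gamma'^*=(\Gamma'\circ\Gamma)^*=\mathrm{id}\quad\text{on }H^i_{nr}(W,A),
\]
where $\Gamma'^*\colon H^i_{nr}(W,A)\to H^i_{nr}(Y,A)$ and $\Gamma^*\colon H^i_{nr}(Y,A)\to H^i_{nr}(W,A)$. Then for any $\alpha\in H^i_{nr}(W,A)$ we have $\alpha=\Gamma^*(\Gamma'^*(\alpha))$, and since $\Gamma'^*(\alpha)$ lies in $H^i_{nr}(Y,A)=0$, we conclude $\alpha=0$. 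Hence $H^i_{nr}(\widetilde{\mathcal{Y}_0},A)=0$.

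The two points needing care are the following. First, one must have at hand the correspondence formalism --- pullback, cup product, proper pushforward, and compatibility with composition --- for unramified cohomology with coefficients in an arbitrary abelian group $A$; this is standard but relies on the functoriality of $H^\bullet_{nr}$ under these operations. Second, and more essentially, the whole argument hinges on the vanishing of the term $Z'$ supported on $W\times E$: a nonzero such term would in general have nontrivial contravariant action, since its support dominates the first factor $W$, and the factorization through $H^i_{nr}(Y,A)=0$ would collapse. This is exactly why the mildness of the singularities of $\mathcal{Y}_0$, which via Theorem \ref{theodecompstab}(ii) forces $Z'=0$, is indispensable, and why part (i) alone would not suffice.
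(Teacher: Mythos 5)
Your proof is correct and follows essentially the same route as the paper: you let both sides of the decomposition $\Gamma'\circ\Gamma=\Delta_{\widetilde{\mathcal{Y}}_0}+Z$ (with $Z'=0$ from Theorem \ref{theodecompstab}(ii)) act contravariantly on $H^i_{nr}(\widetilde{\mathcal{Y}_0},A)$, use that $(\Gamma'\circ\Gamma)^*$ factors through $H^i_{nr}(Y,A)=0$, that the diagonal acts as the identity, and that the term supported on $D\times\widetilde{\mathcal{Y}}_0$ acts trivially by restriction to the complement of $D$ together with Bloch--Ogus injectivity into the generic point. The only (immaterial) difference is one of ordering: the paper first concludes $\alpha=Z^*\alpha$ and then kills this class by restricting to $U=\widetilde{\mathcal{Y}}_0\setminus D$, whereas you establish $Z^*=0$ first and then invoke the factorization through $H^i_{nr}(Y,A)=0$.
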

\begin{proof} We let both sides of  formula (\ref{eqformcomp})  with $Z'=0$ act on
 $H^i_{nr}( \widetilde{\mathcal{Y}_0}, A)$ (see  \cite{CTV} for a construction of the action). The action of
$ \Gamma'\circ \Gamma$ factors through $H^i_{nr}(Y,A)$ hence it is $0$. Moreover the diagonal acts by the identity map.
We thus conclude that
for any $\alpha\in H^i_{nr}( \widetilde{\mathcal{Y}_0}, A)$,
$$\alpha=Z^*\alpha
.$$
On the other hand, as $Z$ is supported on
$D\times \widetilde{\mathcal{Y}}_0$, the class $Z^*\alpha$  vanishes on $U\times X$, where $U:=X\setminus D$. Hence $\alpha_{\mid U}=0$,
which implies $\alpha=0$ by \cite{blochogus}.
\end{proof}

We are now in position to prove the following result.
\begin{theo}\label{theomainapendix} Let $\mathcal{Y}\rightarrow B$ be a projective flat morphism of relative dimension $n$, where $B$ is smooth, the generic fiber is smooth,
 and  the special fiber
$\mathcal{Y}_0$ has mild singularities.
Assume

(i)   the very general fiber $\mathcal{Y}_b$ satisfies $H^i_{nr}( {\mathcal{Y}_b}, A)=0$,

(ii)   $H^i_{nr}( \widetilde{\mathcal{Y}_0}, A)\not=0$
for some (equivalently, any) desingularization $ \widetilde{\mathcal{Y}_0}$ of $\mathcal{Y}_0$.

 Then
two very general fibers  $\mathcal{Y}_b$,  $\mathcal{Y}_{b'}$ are not stably birational.
\end{theo}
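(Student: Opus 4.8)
The plan is to reduce, via a countability argument, to the case of a constant stable birational type and then to derive a contradiction from the specialization of the decomposition of the diagonal established above.

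First I would fix the dense open locus $B^\circ\subset B$ over which $\pi\colon\mathcal{Y}\to B$ is smooth, and consider
\[
Z:=\{(b,b')\in B^\circ\times B^\circ\ :\ \mathcal{Y}_b\ \text{and}\ \mathcal{Y}_{b'}\ \text{are stably birational}\}.
\]
By writing a stable birational equivalence $\mathcal{Y}_b\times\mathbb{P}^m\dashrightarrow\mathcal{Y}_{b'}\times\mathbb{P}^m$ as a graph and parameterizing such graphs by the relative Chow varieties over $B^\circ\times B^\circ$, then taking the union over all $m\ge 0$, one sees that $Z$ is a countable union of Zariski closed subsets of $B^\circ\times B^\circ$; this is the same countability argument already invoked in the proof of Theorem \ref{theodecompstab}(i). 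Since $B$ is smooth and irreducible, $B\times B$ is irreducible, and $k$ is uncountable (it has infinite transcendence degree over the prime field). Hence either $Z=B^\circ\times B^\circ$, or $Z$ lies in a countable union of \emph{proper} closed subsets, in which case a very general pair $(b,b')$ avoids $Z$ and the fibers $\mathcal{Y}_b$, $\mathcal{Y}_{b'}$ are not stably birational. The second alternative is precisely the conclusion, so it remains to exclude the first.

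So assume $Z=B^\circ\times B^\circ$, i.e. all smooth fibers are mutually stably birational. I would then choose a very general point $b_0\in B^\circ$ and set $Y:=\mathcal{Y}_{b_0}$; this is a smooth projective variety of dimension $n$, and by assumption (i) it satisfies $H^i_{nr}(Y,A)=0$. Because every smooth fiber is stably birational to $Y$, the hypotheses of Theorem \ref{theodecompstab} hold: the general fiber is smooth and stably birational to the fixed variety $Y$, while $\mathcal{Y}_0$ has mild singularities. Part (ii) of that theorem then furnishes, for a suitable desingularization $\widetilde{\mathcal{Y}_0}$, a decomposition of the diagonal
\[
\Gamma'\circ\Gamma=\Delta_{\widetilde{\mathcal{Y}_0}}+Z_0\quad\text{in}\ {\rm CH}^n(\widetilde{\mathcal{Y}_0}\times\widetilde{\mathcal{Y}_0}),
\]
with $Z_0$ supported on $D\times\widetilde{\mathcal{Y}_0}$ for a proper closed $D\subset\widetilde{\mathcal{Y}_0}$. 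Feeding $H^i_{nr}(Y,A)=0$ into Corollary \ref{corodeg} gives $H^i_{nr}(\widetilde{\mathcal{Y}_0},A)=0$, which contradicts assumption (ii). Therefore $Z\ne B^\circ\times B^\circ$, and the variation alternative holds.

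The step I expect to be the main obstacle is the countability of $Z$: one must make precise that the stable birationality locus is a countable union of closed subsets. The subtlety is the unbounded auxiliary factor $\mathbb{P}^m$ entering the definition of stable birationality, so the argument proceeds by showing that for each fixed $m$ the locus where $\mathcal{Y}_b\times\mathbb{P}^m$ and $\mathcal{Y}_{b'}\times\mathbb{P}^m$ are birational is a countable union of closed subsets (via relative Chow varieties and properness), and only then taking the countable union over $m$. Once this is in hand, the rest is bookkeeping: choosing $Y$ to be a very general fiber makes $H^i_{nr}(Y,A)=0$ immediate from (i), and the contradiction is delivered directly by Theorem \ref{theodecompstab}(ii) together with Corollary \ref{corodeg}.
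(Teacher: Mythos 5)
Your proposal follows essentially the same route as the paper's proof: fix a very general fiber $Y$, note that if the general fiber were stably birational to $Y$ then Theorem \ref{theodecompstab}(ii), Corollary \ref{corodeg} and hypothesis (i) would force $H^i_{nr}(\widetilde{\mathcal{Y}_0},A)=0$, contradicting (ii); the paper's proof is exactly this, with your explicit dichotomy on $B^\circ\times B^\circ$ left implicit. However, one justification in your countability step is flawed as written: properness of the relative Chow varieties does \emph{not} give that $Z$ is a countable union of closed subsets, because a limit of graphs of birational maps need not be the graph of a birational map (the limit cycle can break into several components, none of which induces a birational correspondence). What the Chow-variety argument yields directly is only a countable union of \emph{locally closed} subsets; genuine closedness is a specialization theorem --- it is precisely \cite[Theorem 4.1.4]{NS} (see also \cite{KT}), which is what the main body of the paper invokes in Theorem \ref{thm:variation}, and it is not a formal consequence of properness. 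Fortunately, constructibility is all your argument needs: if no member of the countable union is dense in $B^\circ\times B^\circ$, then very general pairs avoid $Z$ and you are done; if some member is dense, it contains a dense open subset $U$, so for a suitable very general $b_0$ the \emph{general} fiber is stably birational to $Y=\mathcal{Y}_{b_0}$, and this is all that Theorem \ref{theodecompstab} requires to run the contradiction. (A final nitpick: infinite transcendence degree over the prime field does not imply $k$ is uncountable --- e.g.\ $\overline{\mathbb{Q}(x_1,x_2,\dots)}$ is countable; in the appendix's setting ``very general'' must be understood relative to a countable algebraically closed field of definition of the family.)
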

\begin{proof}  Fix one very general fiber $\mathcal{Y}_{b'}$ and denote  it by $Y$. We want to show that
the general fiber $\mathcal{Y}_b$ is not stably birational to $Y$. If it is, Corollary \ref{corodeg}  and the vanishing
$H^i_{nr}(\mathcal{Y}_{b'},A)=0$  given by (i) imply that  $H^i_{nr}( \widetilde{\mathcal{Y}_0}, A)=0$, contradicting  assumption  (ii).
\end{proof}

The following  variant of  Theorem \ref{theomainapendix}   is proved as above, using Corollary \ref{corodeg2} below  instead of Corollary \ref{corodeg}.
\begin{theo}  \label{theomainapendixvariat} Let $\mathcal{Y}\rightarrow B$ be a projective flat morphism of relative dimension $n$, where $B$ is smooth. Assume

(i)   the very general fiber $\mathcal{Y}_b$ satisfies $H^i_{nr}( {\mathcal{Y}_b}, A)=0$,

(ii) The central fiber admits a desingularization   $ \widetilde{\mathcal{Y}_0} $ with exceptional divisor $E=\cup_jE_j$ with $E_j$ smooth,
and  $ \widetilde{\mathcal{Y}_0} $  has a nonzero class $\alpha\in H^i_{nr}( \widetilde{\mathcal{Y}_0}, A)$ which vanishes on all the divisors $E_j$.

 Then
two very general fibers  $\mathcal{Y}_b$,  $\mathcal{Y}_{b'}$ are not stably birational.

\end{theo}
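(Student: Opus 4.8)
The plan is to run the proof of Theorem \ref{theomainapendix} essentially verbatim, replacing its use of Corollary \ref{corodeg} by the variant Corollary \ref{corodeg2}, which trades the mild-singularity hypothesis for the extra assumption on the class $\alpha$. Concretely, I would fix one very general fiber $\mathcal{Y}_{b'}$, call it $Y$, so that $H^i_{nr}(Y,A)=0$ by hypothesis (i), and argue by contradiction. If a general fiber $\mathcal{Y}_b$ were stably birational to $Y$, then Theorem \ref{theodecompstab}(i), applied to $\mathcal{Y}\to B$ and $Y$, produces codimension $n$ cycles $\Gamma,\Gamma'$ and a decomposition
\[
\Gamma'\circ\Gamma=\Delta_{\widetilde{\mathcal{Y}}_0}+Z+Z'\quad\text{in }{\rm CH}^n(\widetilde{\mathcal{Y}}_0\times\widetilde{\mathcal{Y}}_0),
\]
with $Z$ supported on $D\times\widetilde{\mathcal{Y}}_0$ and $Z'$ supported on $\widetilde{\mathcal{Y}}_0\times E$. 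The key structural point is that here I only invoke part (i) of Theorem \ref{theodecompstab}: since $\mathcal{Y}_0$ is no longer assumed to have mild singularities, I cannot arrange $Z'=0$, and the whole difficulty is concentrated in controlling this term.

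The substance is the cohomological input, which I would isolate as Corollary \ref{corodeg2}: under the above hypotheses, a class $\alpha\in H^i_{nr}(\widetilde{\mathcal{Y}}_0,A)$ that restricts to $0$ on every $E_j$ must vanish. To prove this I let both sides of the displayed decomposition act on $\alpha$ as correspondences, following the construction of \cite{CTV}. The operator $(\Gamma'\circ\Gamma)^*$ factors through $H^i_{nr}(Y,A)=0$ and so is zero; $\Delta_{\widetilde{\mathcal{Y}}_0}$ acts as the identity; and, exactly as in the proof of Corollary \ref{corodeg}, since $Z$ is supported on $D\times\widetilde{\mathcal{Y}}_0$ the class $Z^*\alpha$ vanishes on $U:=\widetilde{\mathcal{Y}}_0\setminus D$. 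The new point is that $Z'^*\alpha=0$: because $Z'$ is supported on $\widetilde{\mathcal{Y}}_0\times E$, its action on unramified cohomology depends only on the restriction of $\alpha$ to $E=\bigcup_j E_j$, and by hypothesis $\alpha|_{E_j}=0$ for every $j$. Combining these gives $\alpha=-Z^*\alpha$, whence $\alpha|_U=0$ and therefore $\alpha=0$ by Bloch--Ogus \cite{blochogus}. This contradicts (ii), which proves the theorem.

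I expect the main obstacle to be the justification that $Z'^*\alpha=0$, that is, that a correspondence supported on $\widetilde{\mathcal{Y}}_0\times E$ annihilates every class vanishing on each component $E_j$. The clean way I would handle this is to write $Z'=\sum_j Z'_j$ with $Z'_j$ supported on $\widetilde{\mathcal{Y}}_0\times E_j$; since each $E_j$ is smooth, the closed embedding $E_j\hookrightarrow\widetilde{\mathcal{Y}}_0$ allows me to factor the action of $Z'_j$ through the restriction $H^i_{nr}(\widetilde{\mathcal{Y}}_0,A)\to H^i_{nr}(E_j,A)$, using the functoriality of the correspondence action of \cite{CTV} together with smoothness of $E_j$. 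The vanishing $\alpha|_{E_j}=0$ then forces $Z'^*_j\alpha=0$ for each $j$, hence $Z'^*\alpha=0$. Once this support lemma is in place, the remainder of the argument is formal and identical to the proof of Theorem \ref{theomainapendix}.
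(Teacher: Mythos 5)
Your proposal is correct and follows essentially the same route as the paper: the paper proves this theorem by rerunning the proof of Theorem \ref{theomainapendix} with Corollary \ref{corodeg} replaced by Corollary \ref{corodeg2}, whose proof is exactly your argument (correspondence action of $\Gamma'\circ\Gamma=\Delta_{\widetilde{\mathcal{Y}}_0}+Z+Z'$ from Theorem \ref{theodecompstab}(i), factoring through $H^i_{nr}(Y,A)=0$, killing ${Z'}^*\alpha$ via the vanishing of $\alpha$ on the smooth components $E_j$, and concluding from $Z^*\alpha$ vanishing on an open set by Bloch--Ogus). Your explicit decomposition $Z'=\sum_j Z'_j$ supported on $\widetilde{\mathcal{Y}}_0\times E_j$ merely spells out the step the paper asserts tersely, crediting Schreieder's criterion \cite{schreieder1}.
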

The proof    uses the following variant of Corollary \ref{corodeg} based on Schreieder's criterion \cite{schreieder1}.
\begin{coro}  \label{corodeg2}  Under the  same assumptions as in Theorem \ref{theodecompstab} (i), assume that $H^i_{nr}(Y,A)=0$ for some integer $i $ and
abelian group $A$, and that $\mathcal{Y}_0$ has a desingularization $ \widetilde{\mathcal{Y}_0}$ with exceptional divisor
$E=\cup_jE_j$ with $E_j$ smooth. Then  any unramified cohomology class
$\alpha\in H^i_{nr}( \widetilde{\mathcal{Y}_0}, A)$ which vanishes on each component $E_j$ of  $E$ is identically $0$.
\end{coro}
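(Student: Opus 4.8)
The plan is to follow the proof of Corollary \ref{corodeg} verbatim up to the point where the exceptional term intervened, and then, instead of invoking mildness to make that term disappear, to kill its action directly using the hypothesis that $\alpha$ vanishes on every $E_j$. First I would apply Theorem \ref{theodecompstab} (i) to produce codimension $n$ cycles $\Gamma \in {\rm CH}^n(\widetilde{\mathcal{Y}_0} \times Y)$ and $\Gamma' \in {\rm CH}^n(Y \times \widetilde{\mathcal{Y}_0})$ together with the identity
\[
\Gamma' \circ \Gamma = \Delta_{\widetilde{\mathcal{Y}_0}} + Z + Z' \quad \text{in } {\rm CH}^n(\widetilde{\mathcal{Y}_0} \times \widetilde{\mathcal{Y}_0}),
\]
where $Z$ is supported on $D \times \widetilde{\mathcal{Y}_0}$ for a proper closed subset $D$ and $Z'$ is supported on $\widetilde{\mathcal{Y}_0} \times E$. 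I would then let both sides act on the given class $\alpha \in H^i_{nr}(\widetilde{\mathcal{Y}_0}, A)$, using the correspondence action of \cite{CTV}.

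The three terms on the right are treated as follows. The operator attached to $\Gamma' \circ \Gamma$ factors through $H^i_{nr}(Y, A) = 0$ and hence annihilates $\alpha$; the diagonal acts as the identity, contributing $\alpha$; and since $Z$ is supported on $D \times \widetilde{\mathcal{Y}_0}$, the class $Z^* \alpha$ restricts to zero on the dense open set $U := \widetilde{\mathcal{Y}_0} \setminus D$, exactly as in Corollary \ref{corodeg}. Collecting the contributions gives
\[
\alpha + Z^* \alpha + {Z'}^* \alpha = 0,
\]
so that once ${Z'}^* \alpha$ is shown to vanish, $\alpha = -Z^* \alpha$ is supported on $D$, and therefore $\alpha = 0$ by the injectivity of the restriction map $H^i_{nr}(\widetilde{\mathcal{Y}_0}, A) \to H^i_{nr}(U, A)$ coming from \cite{blochogus}.

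The crux is therefore the vanishing ${Z'}^* \alpha = 0$. Since $Z'$ is supported on $\widetilde{\mathcal{Y}_0} \times E$ with $E = \cup_j E_j$ and each $E_j$ smooth, the action ${Z'}^* \alpha = p_{1*}(Z' \cdot p_2^* \alpha)$ ought to depend only on the restrictions $\alpha_{\mid E_j} \in H^i_{nr}(E_j, A)$, which vanish by hypothesis. Making this factorization rigorous is the one genuinely delicate point: unramified cohomology has no naive projection formula, and $E$ sits in $\widetilde{\mathcal{Y}_0}$ only as a divisor, so one cannot simply restrict $\alpha$ and push forward. This is precisely the role of Schreieder's criterion \cite{schreieder1}, which ensures that a correspondence supported on $\widetilde{\mathcal{Y}_0} \times E$ acts through the restrictions of the class to the smooth components $E_j$; granting this, the hypothesis $\alpha_{\mid E_j} = 0$ forces ${Z'}^* \alpha = 0$. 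I expect verifying the applicability of Schreieder's criterion — in particular that the smoothness of the $E_j$ lets us legitimately discard the exceptional contribution — to be the main obstacle, the rest of the argument being identical to Corollary \ref{corodeg}.
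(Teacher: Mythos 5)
Your proposal is correct and follows essentially the same route as the paper's proof: apply Theorem \ref{theodecompstab} (i), let the identity $\Gamma'\circ\Gamma=\Delta_{\widetilde{\mathcal{Y}}_0}+Z+Z'$ act on $\alpha$, kill the $\Gamma'\circ\Gamma$ term via $H^i_{nr}(Y,A)=0$, use the hypothesis $\alpha_{\mid E_j}=0$ on the smooth components to conclude ${Z'}^*\alpha=0$, and finish with the Bloch--Ogus injectivity argument exactly as in Corollary \ref{corodeg}. The only difference is one of emphasis: the paper simply asserts the vanishing ${Z'}^*\alpha=0$ (the factorization of the action of a correspondence supported on $\widetilde{\mathcal{Y}}_0\times E$ through the restrictions to the smooth $E_j$ being exactly Schreieder's criterion \cite{schreieder1}, as you correctly identify), whereas you flag it as the delicate step requiring justification.
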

\begin{proof}  We let both sides of  formula (\ref{eqformcomp})  act on
 $H^i_{nr}( \widetilde{\mathcal{Y}_0}, A)$ (see  \cite{CTV} for a construction of the action). The action of
$ \Gamma'\circ \Gamma$ factors through $H^i_{nr}(Y,A)$ hence it is $0$ since this group is assumed to be $0$.
We  conclude as before that
for any $\alpha\in H^i_{nr}( \widetilde{\mathcal{Y}_0}, A)$,
$$\alpha=Z^*\alpha +{Z'}^*\alpha
.$$
If $\alpha$ vanishes on all  the components $E_j$ of the exceptional divisor $E$, we have ${Z'}^*\alpha=0$.  We thus have $\alpha=Z^*\alpha$ and we conclude as before that
$\alpha=0$.
\end{proof}
The  families to which  Theorem \ref{theomainapendix} applies are essentiall, in characteristic $0$,  all the families of weighted Fano hypersurfaces for which the stable irrationality has been proved by
a degeneration argument to a mildly singular member having a nonzero unramified cohomology class of degree $\leq 3$.
For example, we have
\begin{theo}\label{theoappli}  (i)  Two very general quartic or sextic double solids or quartic hypersurfaces of dimension $3$ or $4$  over $\mathbb{C}$ are not stably birational.

(ii) Two very general hypersurfaces over $\mathbb{C}$ of degree $\geq 5$ and dimension $n$ with
$5\leq n\leq 9$
 are not stably birational.

\end{theo}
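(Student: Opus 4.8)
The plan is to derive both parts directly from Theorem \ref{theomainapendix} and its variant Theorem \ref{theomainapendixvariat}, feeding in for each family the singular degeneration already produced in the literature on stable irrationality. For a fixed family I would take $B$ to be a dense open subset of the appropriate linear system: the space of quartics, respectively sextics, on $\mathbb{P}^3$ (with $\mathcal{Y}\to B$ the universal double cover) for the double solids, and the space of quartic forms on $\mathbb{P}^4$, respectively $\mathbb{P}^5$, for the quartic threefolds and fourfolds. In each case the very general member is smooth, and I would let $\mathcal{Y}_0$ be the singular member exhibited in the corresponding irrationality proof. The whole statement then reduces to verifying the two hypotheses of Theorem \ref{theomainapendix}: hypothesis (ii), that a desingularization $\widetilde{\mathcal{Y}_0}$ carries a nonzero class in $H^i_{nr}(\widetilde{\mathcal{Y}_0},\mathbb{Z}/2)$ for a suitable $i\le 3$, and hypothesis (i), that the very general \emph{smooth} member has $H^i_{nr}(\mathcal{Y}_b,\mathbb{Z}/2)=0$ in the same degree.

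For part (i) I would argue family by family, taking hypothesis (ii) from the cited results: the Brauer (degree two) class on the Artin--Mumford type model underlying the quartic hypersurfaces of dimension three and four, following \cite{CTP} and \cite{CTPI}, and the degree three unramified class on the singular double solids, following \cite{voisin} and \cite{CTPI}. The singular models occurring here have only ordinary quadratic singularities, which are mild in the sense of the appendix, so that Theorem \ref{theodecompstab}(ii) applies with $Z'=0$ and one may invoke Theorem \ref{theomainapendix} directly. Hypothesis (i) is the new input. For $i=2$ it is formal: a smooth Fano is simply connected with $H^2(\mathcal{Y}_b,\mathcal{O}_{\mathcal{Y}_b})=0$, so its Brauer group vanishes and $H^2_{nr}(\mathcal{Y}_b,\mathbb{Z}/2)=0$. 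For $i=3$ I would invoke the vanishing of degree three unramified cohomology of the very general smooth member, which holds in these low dimensions.

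For part (ii) I would run the same scheme with $i=3$ and $A=\mathbb{Z}/2$, now taking the singular model and its nonzero degree three unramified class from Schreieder's construction \cite{schreieder1}; this is exactly the input by which stable irrationality of very general hypersurfaces of degree at least five is established in this dimension range, and the interval $5\le n\le 9$ is precisely the one in which a nonzero \emph{degree three} unramified class survives on the singular model. Since the relevant singularities need not be mild, I would here replace Theorem \ref{theomainapendix} by Theorem \ref{theomainapendixvariat}: it is enough to know that the nonzero class $\alpha\in H^3_{nr}(\widetilde{\mathcal{Y}_0},\mathbb{Z}/2)$ can be chosen to vanish on every component $E_j$ of the exceptional divisor, which is guaranteed by Schreieder's criterion \cite{schreieder1}, while hypothesis (i) holds as before for the smooth member.

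The hard part will be hypothesis (i), the vanishing of the relevant unramified cohomology on the very general smooth fiber, and it is exactly this that separates the variation statement from an ordinary stable irrationality statement. The latter only ever compares the family to $\mathbb{P}^n$, whose unramified cohomology is trivially zero, whereas here the comparison is with another very general fiber, so one must control the unramified cohomology of the smooth member itself. The Brauer case is automatic, but the degree three vanishing is genuinely responsible for the dimension restrictions in the statement: it is the unavailability of such a vanishing together with a matching nonzero degree three class on the singular side, once $n$ grows beyond the stated range, that prevents the argument from reaching all Fano hypersurfaces.
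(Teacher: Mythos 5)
Your overall strategy is the same as the paper's: feed the known singular degenerations into Theorem \ref{theomainapendix} (mild singularities) and its variant Theorem \ref{theomainapendixvariat} (class vanishing on the exceptional divisor). However, two of your verifications have genuine problems. In part (i), all four families are handled in the paper by a \emph{degree two} class, namely a nonzero torsion class in $H^3_B(\widetilde{\mathcal{Y}}_0,\mathbb{Z})$ on the desingularized special fiber, coming from Artin--Mumford \cite{artinmumford} (quartic double solids), Beauville \cite{beauville} (sextic double solids), Colliot-Th\'el\`ene--Pirutka \cite{CTPI} (quartic threefolds) and Schreieder \cite{schreieder} (quartic fourfolds). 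The ``degree three unramified class on the singular double solids'' you attribute to \cite{voisin} and \cite{CTPI} does not exist in those references: Voisin's degeneration of quartic double solids is to the Artin--Mumford solid, and the class used there is its Brauer (degree two) class, so your hypothesis (ii) for the double solids is unsupported as cited. Moreover, your ``formal'' verification of hypothesis (i) in degree two is incorrect: simple connectedness together with $H^2(X,\mathcal{O}_X)=0$ gives only ${\rm Br}(X)\cong H^3_B(X,\mathbb{Z})_{{\rm tors}}$, not its vanishing --- the desingularized Artin--Mumford threefold is simply connected, rationally connected, has $H^2(\mathcal{O})=0$, and nevertheless carries a nonzero $2$-torsion Brauer class. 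The input actually needed, and invoked in the paper, is the torsion-freeness of $H^3_B$ of the specific \emph{smooth} members, which for hypersurfaces follows from the Lefschetz theorems and for double solids is a known but nonformal computation.

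In part (ii) your architecture again matches the paper (Schreieder's degeneration, a degree three class with torsion coefficients vanishing on the components of the exceptional divisor, hence Theorem \ref{theomainapendixvariat} via Corollary \ref{corodeg2}); incidentally the degeneration and the class come from \cite{schreieder}, while \cite{schreieder1} supplies the vanishing criterion. The real issue is that hypothesis (i) --- the vanishing $H^3_{nr}(\mathcal{Y}_b,A)=0$ for the very general smooth hypersurface of degree $\geq 5$ and dimension $5\leq n\leq 9$ --- is the crux of the whole proof, and you assert it without argument (``holds in these low dimensions''). The paper proves it: by \cite{CTV}, degree three unramified cohomology with torsion coefficients measures the defect of the integral Hodge conjecture for degree four classes, and for $n\geq 5$ the Lefschetz hyperplane theorem shows that $H^4_B(X,\mathbb{Z})$ is generated by the restriction of the class from $\mathbb{P}^{n+1}$, so there are no nontrivial integral Hodge classes to test and the defect vanishes. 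This is exactly where the lower bound $n\geq 5$ enters (the upper bound $n\leq 9$ comes from Schreieder's side). You correctly flag this step as ``the hard part'' and as responsible for the dimension restrictions, but flagging a gap is not filling it; as written, the proposal leaves the central nonformal step of both parts either unproved or justified by an argument that is false in general.
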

\begin{proof}  The case (i) uses  Theorem \ref{theomainapendix}. We know by \cite{artinmumford} in case of quartic double solids, by Beauville
\cite{beauville} in case of sextic double solids, by Colliot-Th\'{e}l\`{e}ne-Pirutka
\cite{CTPI} in case of quartic threefolds and Schreieder \cite{schreieder} in the case of
quartic fourfolds,
that they admit degenerations with mild singularities having a nonzero unramified cohomology class
of degree $2$, which is given by a nonzero torsion class in $H^3_B(\widetilde{\mathcal{X}}_0,\mathbb{Z})$.  On the other hand, for all these classes of varieties,
the smooth member   $X$ does not have torsion in $H^3_B(X,\mathbb{Z})$.  Theorem \ref{theomainapendix} thus applies.

For  case (ii), we use Schreieder's degeneration, which in the numerical range above produces a desingularized central fiber with a nonzero
unramified cohomology class of degree  $3$ with torsion coefficients on the desingularized central fiber, vanishing on the exceptional divisor. The very general hypersurface  $X$ on the other hand has trivial unramified cohomology of degree $3$. Indeed, it is proved in
\cite{CTV} that such a class measures the defect of the Hodge conjecture for degree $4$ integral Hodge classes on $X$. But the smooth  hypersurface   of degree $\geq 5$ in $\mathbb{P}^{n+1}$ for $5\leq n\leq9 $ has no integral  Hodge class of degree $4$  not coming from
$\mathbb{P}^n$  by the  Lefschetz theorem on hyperplane sections.
\end{proof}
The reason we can not a priori extend Theorem  \ref{theomainapendix} to all hypersurfaces shown by Schreieder
\cite{schreieder}  not to be stably rational
is the fact that we do not know how to compute unramified cohomology of degree $\geq 4$ for general hypersurfaces, so we are not able
to check Condition (i)
in Theorem \ref{theomainapendix}. Note that the case of smooth hypersurfaces is covered by Shinder's main theorem.


%


\end{document}